\documentclass[a4paper, twoside, 11pt, english]{article}

\usepackage[T1]{fontenc}
\usepackage[utf8]{inputenc}

\usepackage{etoolbox}
\usepackage{amsmath,amsthm,amssymb,stmaryrd}
\usepackage{mathtools}
\usepackage[ttscale=.875]{libertine}
\usepackage[libertine]{newtxmath}

\usepackage[numbers,sort&compress]{natbib}
\usepackage{comment}
\usepackage{ifthen}
\usepackage{algorithm2e}

\usepackage{fancyhdr, titlesec, url, enumerate, microtype,setspace}
\usepackage{multicol}
\usepackage[all,knot,poly]{xy}
\usepackage{tikz}
\usetikzlibrary{decorations.pathreplacing}

\usepackage{tikz-qtree}
\usepackage{youngtab}
\usepackage{ytableau}

\usepackage[english]{babel}

\usepackage{hyperref}

\CompileMatrices

\usetikzlibrary{calc}

\newcount\hh
\newcount\mm
\mm=\time
\hh=\time
\divide\hh by 60
\divide\mm by 60
\multiply\mm by 60
\mm=-\mm
\advance\mm by \time
\def\hhmm{\number\hh:\ifnum\mm<10{}0\fi\number\mm}


\newcommand{\periodafter}[1]{\ifstrempty{#1}{}{#1.}}
\titleformat{\section}[block]{\scshape\filcenter\LARGE}{\thesection.}{.5em}{}
\titleformat{\subsection}[block]{\bfseries\filcenter\large}{\thesubsection.}{.5em}{\medskip}
\titleformat{\subsubsection}[runin]{\bfseries}{\thesubsubsection.}{.5em}{\periodafter}
\titlespacing{\subsubsection}{0pt}{\topsep}{.5em}

\newtheoremstyle{ntheorem}%
	{\topsep}{\topsep}{\itshape}{0pt}{\bfseries}{.}{.5em}%
	{\thmnumber{#2.\hspace{.5em}}\thmname{#1}\thmnote{ (#3)}}
	
\newtheoremstyle{ndefinition}%
	{\topsep}{\topsep}{\normalfont}{0pt}{\bfseries}{.}{.5em}%
	{\thmnumber{#2.\hspace{.5em}}\thmname{#1}\thmnote{ (#3)}}
	
\newtheoremstyle{nremark}%
	{\topsep}{\topsep}{\normalfont}{0pt}{\itshape}{.}{.5em}%
	{\thmnumber{}\thmname{#1}\thmnote{ (#3)}}

\theoremstyle{ndefinition}

	\makeatletter
\def\@equationname{equation}

\makeatother

\pagestyle{fancy}
\setlength{\oddsidemargin}{0cm}
\setlength{\evensidemargin}{0cm}
\setlength{\topmargin}{0cm} 
\setlength{\headheight}{1cm}
\setlength{\headsep}{1cm}
\setlength{\textwidth}{16cm}
\setlength{\marginparwidth}{0cm}
\setlength{\footskip}{2cm}
\setlength{\headwidth}{16cm}

\fancyhead{}\fancyfoot[LC,RC]{}
\fancyhead[LE]{\leftmark}
\fancyhead[RO]{\rightmark}
\fancyfoot[LE,RO]{$\thepage$}
\fancypagestyle{plain}{
\fancyhf{}\fancyfoot[LC,RC]{}
\fancyfoot[LE,RO]{$\thepage$}

}

\setlength{\arraycolsep}{1pt}




\UseTips
\SelectTips{eu}{11}

\newdir{ >}{{}*!/-10pt/@{>}}
\newdir{ -}{{}*!/-10pt/@{}}
\newdir{> }{{}*!/+10pt/@{>}}

\makeatletter

\xyletcsnamecsname@{dir4{}}{dir{}}
\xydefcsname@{dir4{-}}{\line@ \quadruple@\xydashh@}
\xydefcsname@{dir4{.}}{\point@ \quadruple@\xydashh@}
\xydefcsname@{dir4{~}}{\squiggle@ \quadruple@\xybsqlh@}
\xydefcsname@{dir4{>}}{\Tttip@}
\xydefcsname@{dir4{<}}{\reverseDirection@\Tttip@}

\xydef@\quadruple@#1{%
	\edef\Drop@@{%
		\dimen@=#1\relax
		\dimen@=.5\dimen@
		\A@=-\sinDirection\dimen@
		\B@=\cosDirection\dimen@
		\setboxz@h{%
			\setbox2=\hbox{\kern3\A@\raise3\B@\copy\z@}%
			\dp2=\z@ \ht2=\z@ \wd2=\z@ \box2
			\setbox2=\hbox{\kern\A@\raise\B@\copy\z@}%
			\dp2=\z@ \ht2=\z@ \wd2=\z@ \box2
			\setbox2=\hbox{\kern-\A@\raise-\B@\copy\z@}%
			\dp2=\z@ \ht2=\z@ \wd2=\z@ \box2
			\setbox2=\hbox{\kern-3\A@\raise-3\B@ \noexpand\boxz@}%
			\dp2=\z@ \ht2=\z@ \wd2=\z@ \box2
		}%
		\ht\z@=\z@ \dp\z@=\z@ \wd\z@=\z@ \noexpand\styledboxz@
	}%
}

\xydef@\Tttip@{\kern2pt \vrule height2pt depth2pt width\z@
	\Tttip@@ \kern2pt \egroup
	\U@c=0pt \D@c=0pt \L@c=0pt \R@c=0pt \Edge@c={\circleEdge}%
	\def\Leftness@{.5}\def\Upness@{.5}%
	\def\Drop@@{\styledboxz@}\def\Connect@@{\straight@{\dottedSpread@\jot}}}
	
\xydef@\Tttip@@{%
	\dimen@=.25\dimen@
 	\B@=\cosDirection\dimen@
	\setboxz@h\bgroup\reverseDirection@\line@ \wdz@=\z@ \ht\z@=\z@ \dp\z@=\z@
	{\vDirection@(1,-1)\xydashl@ \xyatipfont\char\DirectionChar}%
	{\vDirection@(1,+1)\xydashl@ \xybtipfont\char\DirectionChar}%
}

\xydef@\ar@form{
	\ifx \space@\next \expandafter\DN@\space{\xyFN@\ar@form}%
	\else\ifx ^\next \DN@ ^{\xyFN@\ar@style}\edef\arvariant@@{\string^}%
	\else\ifx _\next \DN@ _{\xyFN@\ar@style}\edef\arvariant@@{\string_}%
	\else\ifx 0\next \DN@ 0{\xyFN@\ar@style}\def\arvariant@@{0}%
	\else\ifx 1\next \DN@ 1{\xyFN@\ar@style}\def\arvariant@@{1}%
	\else\ifx 2\next \DN@ 2{\xyFN@\ar@style}\def\arvariant@@{2}%
	\else\ifx 3\next \DN@ 3{\xyFN@\ar@style}\def\arvariant@@{3}%
	\else\ifx 4\next \DN@ 4{\xyFN@\ar@style}\def\arvariant@@{4}%
	\else\ifx \bgroup\next \let\next@=\ar@style
	\else\ifx [\next \DN@[##1]{\ar@modifiers{[##1]}}
	\else\ifx *\next \DN@ *{\ar@modifiers}%
	\else\addLT@\ifx\next \let\next@=\ar@slide
	\else\ifx /\next \let\next@=\ar@curveslash
	\else\ifx (\next \let\next@=\ar@curveinout 
	\else\addRQ@\ifx\next \addRQ@\DN@{\ar@curve@}%
	\else\addLQ@\ifx\next \addLQ@\DN@{\xyFN@\ar@curve}%
	\else\addDASH@\ifx\next \addDASH@\DN@{\defarstem@-\xyFN@\ar@}%
	\else\addEQ@\ifx\next \addEQ@\DN@{\def\arvariant@@{2}\defarstem@-\xyFN@\ar@}%
	\else\addDOT@\ifx\next \addDOT@\DN@{\defarstem@.\xyFN@\ar@}%
	\else\ifx :\next \DN@:{\def\arvariant@@{2}\defarstem@.\xyFN@\ar@}%
	\else\ifx ~\next \DN@~{\defarstem@~\xyFN@\ar@}%
	\else\ifx !\next \DN@!{\dasharstem@\xyFN@\ar@}%
	\else\ifx ?\next \DN@?{\ar@upsidedown\xyFN@\ar@}%
	\else \let\next@=\ar@error
	\fi\fi\fi\fi\fi\fi\fi\fi\fi\fi\fi\fi\fi\fi\fi\fi\fi\fi\fi\fi\fi\fi\fi \next@}

\makeatother


\newcommand{\fl}{\rightarrow}

\newcommand{\dfl}{\Rightarrow}

\newcommand{\tfl}{\Rrightarrow}
\newcommand{\qfl}{\xymatrix@1@C=10pt{\ar@4 [r] &}}



\newcommand{\tck}[1]{#1^{\top}}



\renewcommand{\phi}{\varphi}
\renewcommand{\epsilon}{\varepsilon}

\newcommand{\Zb}{\mathbb{Z}}

\newcommand{\C}{\mathbf{C}}

\newcommand{\M}{\mathbf{M}}

\def\catego#1{{\bf{\sf #1}}}

\newcommand{\Cat}{\catego{Cat}}
\newcommand{\Act}{\catego{Act}}


%

%


\newcommand{\ifthen}[2]{\ifthenelse{#1}{#2}{}}



\definecolor{Blue}{rgb}{0.0,1.0,1.0}
\definecolor{cyan}{RGB}{175,238,238} 
\definecolor{GreenL}{rgb}{0.0,1.0,0.0}

\newcommand{\Cdot}{\!\cdot\!}

\renewcommand{\leq}{\leqslant}
\renewcommand{\geq}{\geqslant}

\newcommand{\lecCol}[1]{\llbracket #1\rrbracket}

\DeclareMathOperator{\PreColo}{PreCol}

\DeclareMathOperator{\Chin}{Ch}

\def\Ich{r}
\def\Jch{l}

\DeclareMathOperator{\ChinT}{Ch}

\DeclareMathOperator{\R}{R}

\makeatletter
\def\blfootnote{\xdef\@thefnmark{}\@footnotetext}
\makeatother

\def\Xr{\mathcal{X}}

\def\len{\ell}

\newcommand{\insl}[1]{\rightsquigarrow_{#1}}
\newcommand{\insr}[1]{\;\raisebox{0.1em}{{\rotatebox[origin=c]{180}{$\rightsquigarrow$}}}_{#1}\;}

\def\ordrecolChinese{\preccurlyeq_{\text{Ch}}}

\newcount\cols
{\catcode`,=\active\catcode`|=\active
 \gdef\Young#1{\hbox{$\vcenter
 {\mathcode`,="8000\mathcode`|="8000
  \def,{\global\advance\cols by 1 &}%
  \def|{\cr
        \multispan{\the\cols}\hrulefill\cr
        &\global\cols=2 }%
  \offinterlineskip\everycr{}\tabskip=0pt
  \dimen0=\ht\strutbox \advance\dimen0 by \dp\strutbox
  \halign
   {\vrule height \ht\strutbox depth \dp\strutbox##
    &&\hbox to \dimen0{\hss$##$\hss}\vrule\cr
    \noalign{\hrule}&\global\cols=2 #1\crcr
    \multispan{\the\cols}\hrulefill\cr%
   }
 }$}}
\gdef\Skew(#1:#2){\hbox{$\vcenter
{\mathcode`,="8000\mathcode`|="8000
  \dimen0=\ht\strutbox \advance\dimen0 by \dp\strutbox
  \def\boxbeg{\vbox
    \bgroup\hrule\kern-0.4pt\hbox to\dimen0\bgroup\strut\vrule\hss$}%
  \def\boxend{$\hss\egroup\hrule\egroup}%
  \def,{\boxend\boxbeg}%
  \def|##1:{\boxend\vrule\egroup\nointerlineskip\kern-0.4pt
    \moveright##1\dimen0\hbox\bgroup\boxbeg}%
  \def\\##1\\##2:{\boxend\vrule\egroup\nointerlineskip\kern-0.4pt
    \kern ##1\dimen0\moveright##2\dimen0\hbox\bgroup\boxbeg}%
  \moveright#1\dimen0\hbox\bgroup\boxbeg#2\boxend\vrule\egroup
 }$}}
}

\definecolor{vert}{rgb}{0,0.45,0}

\DeclareMathOperator{\Srs}{\mathcal{R}}

\titleformat{\section}[block]{\scshape\filcenter\Large}{\thesection.}{.5em}{}
\titleformat{\subsection}[runin]{\bfseries}{\thesubsection.}{.5em}{}[.]
\titleformat{\subsubsection}[runin]{\bfseries}{\thesubsubsection.}{.5em}{}[.]
\titlespacing{\subsubsection}{0pt}{10pt}{.5em}

\theoremstyle{ntheorem}
  	\newtheorem{theorem}[subsection]{Theorem}
  	\newtheorem{proposition}[subsection]{Proposition}
	\newtheorem{lemma}[subsection]{Lemma}
  	\newtheorem{corollary}[subsection]{Corollary}

\begin{document}
\thispagestyle{empty}

\begin{center}

\begin{doublespace}
\begin{huge}
{\scshape Chinese syzygies by insertions}

\end{huge}

\bigskip
\hrule height 1.5pt 
\bigskip

\begin{Large}
{\scshape Nohra Hage \qquad Philippe Malbos}
\end{Large}
\end{doublespace}

\vskip+55pt

\begin{small}\begin{minipage}{14cm}
\noindent\textbf{Abstract --}
We construct a finite convergent semi-quadratic presentation for the Chinese monoid by adding column generators and using combinatorial properties of  insertion algorithms on Chinese staircases.
We extend this presentation into a coherent one whose generators are columns, rewriting rules are defined by insertion algorithms, and whose syzygies are defined as relations among insertion algorithms.
 Such a coherent  presentation is used for representations of Chinese monoids, in particular, it is a way to describe actions of Chinese monoids on categories.

\vskip+10pt

\smallskip\noindent\textbf{Keywords --}  Chinese monoids,  syzygies,  Chinese staircases, insertion algorithms,  string rewriting.

\vskip+3pt

\smallskip\noindent\textbf{M.S.C. 2010 -- Primary:} 20M05, 16S15. \textbf{Secondary:} 68Q42, 20M35.
\end{minipage}\end{small}

\end{center}

\tikzset{every tree node/.style={minimum width=1em,draw,circle},
         blank/.style={draw=none},
         edge from parent/.style=
         {draw,edge from parent path={(\tikzparentnode) -- (\tikzchildnode)}},
         level distance=0.8cm}

\vskip+40pt

\section{Introduction}

The structure of Chinese monoids appeared in the classification of monoids with the growth function coinciding with that of plactic monoids,~\cite{DuchampKrob94}. The latter monoids emerged from the works of Schensted~\cite{Schensted61} and Knuth~\cite{Knuth70} on the combinatorial study of Young tableaux and they have found several applications in algebraic combinatorics, representation theory and probabilistic combinatorics,~\cite{Lothaire02,Fulton97,OConnell03}. One of the motivations for studying Chinese monoids is that they are also related to Young tableaux, and therefore they might play a similar role as plactic monoids  in the same application areas. 
Representations of  the Chinese monoid of finite rank are studied in~\cite{KubatOkninski16} by constructing all its irreducible representations. More generally, we are interested in the study of the actions of Chinese monoids on categories. One approach is to make explicit coherent presentations whose relations are described by insertion algorithms and whose syzygies are defined as relations among these relations.
In this article, we compute coherent presentations for the Chinese monoid by rewriting methods using combinatorial properties of Chinese staircases.
\medskip

This work is a part of a broader project that consists of studying,  by a rewriting approach, families of monoids defined from combinatorial objects constructed using insertion algorithms. For instance,  plactic monoids of type A are related to Young tableaux~\cite{LascouxSchutsenberger81},  plactic monoids of classical types to symplectic and orthogonal tableaux, \cite{Lecouvey02, Lecouvey03}, Chinese monoids to Chinese staircases,~\cite{DuchampKrob94,CassaigneEspieKrobNovelliHibert01}, hypoplactic monoids to quasi-ribbon tableaux,~\cite{Novelli00}, left and right patience sorting monoids to left and right patience sorting tableaux,~\cite{THOMASYong11,CainMalheiroSilva19},   and stalactic monoids to stalactic tableaux~\cite{HivertNovelliThibon07,Priez2013}. Moreover, binary search trees, binary search trees
with multiplicities and pairs of twin binary search
trees are used to describe normal forms for sylvester monoids,~\cite{HivertNovelliThibon05},  taiga monoids,~\cite{Priez2013}, and Baxter monoids,~\cite{Giraudo12}. We are interested in the study of the syzygies for the presentations of theses monoids by computing finite coherent convergent  presentations. Such coherent presentations  are constructed for Artin monoids in~\cite{GaussentGuiraudMalbos15} and for plactic monoids of type~A in~\cite{HageMalbos17}.
The study of the syzygies in a monoid produces in higher dimensions free objects that are homotopically equivalent to the original monoid and then allows computation of its \emph{homological invariants}. Indeed, this study provides the first two steps in the computation of  a \emph{polygraphic resolution} of the monoid, that is, a categorical cofibrant replacement of the monoid in a free $(\omega,1)$-category, whose acyclicity is proved by an iterative construction of a normalization reduction strategy,~\cite{GuiraudMalbos12advances,GuiraudMalbos10smf}. Moreover, coherent presentations are also useful to describe the notion of an \emph{action of the monoid on categories},~\cite{GaussentGuiraudMalbos15}. 
\medskip

The \emph{Chinese monoid of rank $n>0$},  denoted by~$\C_n$,  is generated by~$[n]:=\{1<\ldots<n\}$ and submitted to the relations~$zyx=zxy=yzx$  for all~$1\leq x\leq y\leq z \leq n$. These relations generate the \emph{Chinese congruence}, denoted by~$\sim_{\C_n}$,  and interpreted in~\cite{CassaigneEspieKrobNovelliHibert01} using the notion of Chinese staircases.
A \emph{Chinese staircase} is a collection of boxes in right-justified rows,  filled with non-negative integers, whose rows and  columns are indexed with~$[n]$ from top to bottom and from right to left respectively and where the $i$-th row contains $i$ boxes, for~$1 \leq i \leq n$.
We will denote by~$\ChinT_n$ the set of  Chinese staircases over~$[n]$ and by~$\R$ the map on~$\ChinT_n$ that reads a Chinese staircase row by row from right to left and from top to bottom as defined in Subsection~\ref{SS:Chinesestaircases}.  A Schensted-like insertion algorithm, denoted by~$\insr{\Ich}$, is introduced in~\cite{CassaigneEspieKrobNovelliHibert01}, and consists in inserting an element of~$[n]$ into  a Chinese staircase from the right, yielding to a new Chinese staircase. 
From a word $w=x_1x_2\ldots x_k$ on~$[n]$, we associate a Chinese staircase~$\lecCol{w}_{\Ich}$ obtained by insertion of~$w$ in the empty staircase~$\emptyset$ by application of~$\insr{\Ich}$ step by step from left to right:
\[
\lecCol{w}_{\Ich}:=
(\emptyset \insr{\Ich} w)
= 
(\ldots((\emptyset \insr{\Ich} x_1) \insr{\Ich} x_2) \insr{\Ich} \ldots ) \insr{\Ich} x_k.
\]
Similarly, a Chinese staircase denoted by~$\lecCol{w}_{\Jch}$ is computed by inserting the elements of~$w$ from right to left   in the empty staircase~$\emptyset$ by application of the left insertion~$\insl{\Jch}$ introduced in~\cite{CainGrayMalheiroChinese} and that inserts an element of~$[n]$ into a Chinese staircase from the left.
The set of Chinese staircases satisfies the \emph{cross-section property} for the Chinese congruence~$\sim_{\C_n}$, that is,  for all words~ $w,w'$ on $[n]$, $w\sim_{\C_n} w'$ if and only if the insertion algorithm yields the same Chinese staircase: $\lecCol{w}_{\Ich} = \lecCol{w'}_{\Ich}$,~\cite{CassaigneEspieKrobNovelliHibert01}. So the elements of the Chinese monoid can be identified with the Chinese staircases, which therefore also form a monoid.
Moreover, the right and left insertion algorithms allow one to define two internal products on $\ChinT_n$ by setting~$t \star_{\Ich} t' = (t \insr{\Ich} \R(t'))$ and~$t \star_{\Jch} t' = (\R(t') \insl{\Jch} t)$, for all~$t,t'$ in $\ChinT_n$.
Following the cross-section property,  the compositions~$\star_{\Ich}$  and~$\star_{\Jch}$ are associative and the following equality
\[
y \insl{\Jch} (t \insr{\Ich} x )  
\:=\:
(y \insl{\Jch} t) \insr{\Ich} x
\]
holds, for all~$t$ in~$\ChinT_n$ and $x,y$ in~$[n]$. 
In particular,  the following equality~$\lecCol{w}_{\Ich} = \lecCol{w}_{\Jch}$ holds, for any word $w$ on~$[n]$. 
In this way, the products~$\star_{\Ich}$ and~$\star_{\Jch}$ equip the set~$\ChinT_n$ with two monoid structures that are anti-isomorphic.

We construct in Section~\ref{S:SemiQuadraticConvergentPresentationChineseMonoids} a finite semi-quadratic convergent presentation for the monoid~$\C_n$, denoted by~$\Srs(Q_n, \C_n)$,  whose set of generators~$Q_n$ is made of \emph{columns} over $[n]$ of length at most~$2$ and \emph{square generators} and whose rules are 
\[
\gamma_{u,v}:c_u\Cdot c_v\dfl c_{e}\Cdot c_{e'}
\]
for all~$c_u,c_v$ in~$Q_n$ such that~$c_u\Cdot c_v$  does not form a  Chinese staircase and~$c_u\star_{\Ich} c_v$ is equal to the Chinese staircase composed by the columns~$c_e$ and~$c_{e'}$. We show that this rewriting system can be obtained from the Knuth-like presentation of~$\C_n$ by applying
\emph{Tietze transformations}  that consist in adding or removing definable generators and in adding or removing derivable relations on a presentation of a monoid in such a way that they do not change the presented monoid, see~\cite{GaussentGuiraudMalbos15}. Moreover, we show that the confluence of the rewriting system~$\Srs(Q_n, \C_n)$ is a  direct consequence of the associativity of the product~$\star_{\Ich}$. 
We deduce that the  monoid~$\C_n$ has finite derivation type~$\mathrm{FDT}_{\infty}$ and  finite homological type~$\mathrm{FP}_{\infty}$. 
Note that the finite convergent presentations of Chinese monoids  already obtained in~\cite{ChenQiu08,Karpuz10}, by completion of Chinese relations, and in~\cite{CainGrayMalheiroChinese} by adding column generators, are not semi-quadratic, and thus it is difficult to extend them into coherent ones.

We extend in Section~\ref{S:ChineseSyzygies} the rewriting system $\Srs(Q_n,\C_n)$ into a finite coherent convergent presentation of the Chinese monoid $\C_n$ with an explicit description of the Chinese syzygies. We show in Theorem \ref{T:CoherenceQCol3} that~$\Srs(Q_n, \C_n)$  extends into a finite convergent coherent presentation of the  monoid $\C_n$ by adjunction of generating syzygies with the following decagonal form
\[
\xymatrix @C=2.8em @R=0.65em {
&
c_{e}\Cdot  c_{e'}\Cdot c_t
   \ar@2[r] ^-{\gamma_{e,\widehat{e',t}}} 
 \ar@3 []!<95pt,-10pt>;[dd]!<95pt,10pt> ^{\;\mathcal{X}_{u,v,t}} 
&
c_{e}\Cdot c_{b}\Cdot c_{b'}
   \ar@2[r] ^-{\gamma_{\widehat{e,b},b'}} 
&
c_{s}\Cdot c_{s'}\Cdot c_{b'}
   \ar@2[r] ^-{\gamma_{s,\widehat{s',b'}}}
&
{c_s\Cdot c_k\Cdot  c_{k'}}
 \ar@2@/^/[dr] ^-{\gamma_{\widehat{s,k},k'}}
\\
c_u\Cdot c_v\Cdot c_t
   \ar@2@/^/[ur] ^-{\gamma_{\widehat{u,v},t}}
   \ar@2@/_/[dr] _-{\gamma_{u,\widehat{v,t}}}
&&&&&
c_{\Jch}\Cdot c_{m}\Cdot c_{k'}
\\
&
c_u\Cdot c_{w}\Cdot c_{w'}
   \ar@2[r] _-{\gamma_{\widehat{u,w},w'}}
&
c_{a}\Cdot c_{a'}\Cdot c_{w'}
  \ar@2[r] _-{\gamma_{a,\widehat{a',w'}}}
&
c_{a}\Cdot c_{d}\Cdot c_{d'}
  \ar@2[r] _-{\gamma_{a,\widehat{a',w'}}}
& c_{\Jch}\Cdot c_{l'}\Cdot c_{d'}
  \ar@2@/_/[ur] _-{\gamma_{l,\widehat{l',d'}}}
}	
\]
for all~$c_u,c_v,c_t$ in~$Q_n$ such that~$c_u\Cdot c_v$ and~$c_v\Cdot c_t$ are not  normal forms with respect to~$\Srs(Q_n, \C_n)$, and where the $2$-cells~$\gamma_{-,-}$ denote either a rewriting rule of $\Srs(Q_n, \C_n)$ or an identity.
We show in Subsection~\ref{SS:RelationsAmongInsertionsAlgorithms} how the generating syzygy of the coherent presentation of the Chinese monoid can be interpreted in terms of the right and left insertion algorithms.
Finally, we use in Subsection~\ref{SSS:ActionsChineseMonoids} this coherent presentation in order to describe 
 the actions of  Chinese monoids on categories.

\section{Preliminaries on rewriting}
\label{S:PreliminariesRewriting}

This preliminary section recalls the basic notions of rewriting we use in this article. For a fuller account of the theory, we refer the reader to~\cite{BookOtto93}. We will also recall from \cite{GaussentGuiraudMalbos15,GuiraudMalbos18} the notion of coherent presentation of a monoid that extends the notion of a presentation by syzygies taking into account all the relations amongst the relations.
We will denote by $X^\ast$ the free monoid of \emph{words} written in the alphabet $X$, the product being concatenation of words, and the identity being the empty word, denoted by $\lambda$. 
We will denote by $u=x_1\ldots x_k$ a word in $X^\ast$ of \emph{length} $k$, where~$x_1,\ldots,x_k$ belong to~$X$, and   by~$|u|$ its  length. 

\subsection{String rewriting systems}
A \emph{(string) rewriting system} on $X$ is a subset $R$ of $X^\ast\times X^\ast$. An element~$\beta =(u,v)$ of $R$ is called a \emph{rule} with \emph{source}~$u$ and \emph{target}~$v$, and denoted by $\beta: u\fl v$. We will denote respectively by~$s(\beta)$  and~$t(\beta)$ the source and target of~$\beta$. A \emph{one step reduction} is defined by $wuw'\fl wvw'$ for all words~$w,w'$ in $X^\ast$ and rule $\beta : u\fl v$, and will be denoted by $w\beta w'$. One step reductions form the \emph{reduction relation} on $X^\ast$ denoted by $\fl_R$. A \emph{rewriting path} with respect to $R$ is a finite or infinite sequence $u_0 \fl_R u_1 \fl_R u_2 \fl_R \cdots $. This corresponds to the reflexive and transitive closure of the relation $\fl_R$, that we denote by $\overset{\ast}{\fl}_R$.
A word~$u$ in $X^\ast$ is \emph{reduced} if there is no reduction with source~$u$. A~\emph{normal form} for a word $u$ in $X^\ast$ is a reduced word~$v$ such that $u$ reduces into $v$. The rewriting system~$R$ \emph{terminates} if it has no infinite rewriting path, and it is \emph{(weakly) normalizing} if every word~$u$ in~$X^\ast$ reduces to some normal form.
 A rewriting system~$R$ is \emph{reduced} if, for every rule~$\beta:u\fl v$ in~$R$, the source~$u$ is~$(R\setminus \{\beta\})$-reduced and the target~$v$ is reduced.
The reflexive, symmetric and transitive closure of~$\fl_R$ is the congruence on $X^\ast$ generated by $R$, that we denote by $\approx_R$. The monoid presented by $R$ is the quotient of the free monoid~$X^\ast$ by the congruence~$\approx_R$.
Two rewriting systems are \emph{Tietze equivalent} if they present isomorphic monoids.  
Recall that a \emph{Tietze transformation} between two rewriting systems is a sequence of \emph{elementary Tietze transformations}, defined on a rewriting system $R$ on~$X$ by the following operations:
\begin{enumerate}[{\bf i)}]
\item adjunction or elimination of an element~$x$ in~$X$ and of a rule~$\beta:u\fl x$, where~$u$ is an element in~$X^\ast$ that does not contain~$x$,
\item adjunction or elimination of a rule~$\beta: u\fl v$ such that~$u$ and~$v$ are equivalent by the congruence generated by~$R\setminus \{\beta\}$.
\end{enumerate}
Two rewriting systems are Tietze equivalent if, and only if, there exists a Tietze transformation between them, see~\cite{GaussentGuiraudMalbos15} for more details.

\subsection{Confluence}
A \emph{branching} (resp. \emph{local branching}) of a rewriting system~$R$ on~$X$ is a non ordered pair~$(f,g)$ of reductions (resp. \emph{one step reductions}) of~$R$ on the same word. A branching is \emph{aspherical} if it is of the form~$(f,f)$, for a one step reduction~$f$ and \emph{Peiffer} when it is of the form~$(fv,ug)$ for one step reductions~$f$ and~$g$ with source $u$ and $v$ respectively. The \emph{overlapping} branchings are the remaining local branchings.
An overlapping local branching is \emph{critical} when it is minimal for the order~$\sqsubseteq$ generated by the relations~\mbox{$(f,g) \:\sqsubseteq\: \big( w f w', w g w')$},
given for all local branching~$(f,g)$ and words~$w,w'$ in~$X^\ast$.
A branching~$(f,g)$  is \emph{confluent} if there exist reductions~$f'$ and~$g'$ reducing to the same word:
\begin{equation}
\raisebox{0.6cm}{
\xymatrix @C=2.5em @R=0.2em {
& v
	\ar @{.>} @/^1ex/ [dr] ^{f'}
\\
u
	\ar @/^1ex/ [ur] ^{f}
	\ar @/_1ex/ [dr] _{g}
& &
w
\\
& v'
\ar @{.>}@/_1ex/ [ur] _{g'}
}}
\end{equation}
The rewriting system $R$ is \emph{confluent} if all of its branchings  are confluent, and \emph{convergent} if it is both confluent and terminating. If $R$ is convergent, then every word~$u$ in~$X^\ast$ has a unique normal form.

\subsection{Normalization strategies}
Recall that a \emph{reduction strategy} for a rewriting system $R$ on $X$ specifies a way to apply the rules in a deterministic way. It is defined as a mapping $\vartheta$ of every word $u$ in $X^\ast$ to a one step reduction~$\vartheta_u$ with source $u$.
When $R$ is normalizing, a \emph{normalization strategy} is a mapping~$\sigma$ of every word $u$ to a rewriting path $\sigma_u$ with source $u$ and target a  chosen normal form of $u$. For a reduced rewriting system, we distinguish  two canonical reduction strategies to reduce words: the leftmost one and the rightmost one, according to the way we apply first the rewriting rule that reduces the leftmost or the rightmost subword. They are defined as follows. For every word~$u$ of~$X^\ast$, the set of one step reductions with source~$u$ can be ordered from left to right by setting~$f\prec g$, for one step reductions~$f=v\gamma v'$ and~$g=w\beta w'$ such that~$|v| <|w|$. If~$R$ is finite, then the order~$\prec$ is total and the set of one step reductions of source~$u$ is finite. Hence this set contains a smallest element~$\rho_u$ and a greatest element~$\eta_u$, respectively called the \emph{leftmost} and the \emph{rightmost one step reductions on~$u$}. If, moreover, the rewriting system terminates, the iteration of~$\rho$ (resp.~$\eta$) yields a normalization strategy for~$R$ called the \emph{leftmost} (resp.\ \emph{rightmost}) \emph{normalization strategy of~$R$}: 
\begin{equation}
\sigma^\vdash_{u} := \rho_{u} \star_1 \sigma^\vdash_{t(\rho_u)}
\qquad
(\text{resp.\ }
\sigma^\dashv_u := \eta_u \star_1 \sigma^\dashv_{t(\eta_u)}).
\end{equation}
The \emph{leftmost} (resp. \emph{rightmost}) \emph{rewriting path} on a word $u$ is the rewriting path obtained by applying the leftmost (resp. rightmost) normalization strategy $\sigma^\vdash_u$ (resp. $\sigma^\dashv_u$). We refer the reader to~\cite{GuiraudMalbos12advances} for more details on rewriting normalization strategies.

A rewriting system~$R$ on $X$ is \emph{semi-quadratic}  if for all~$\gamma$ in $R$ we  have~$|s(\gamma)|=2$ and $|t(\gamma)|\leq 2$.
The sources of the critical branchings of a semi-quadratic rewriting system are of length~$3$. When $R$ is reduced, there are at most two rewriting paths with respect to $R$ with source a word of length~$3$.
We will denote by~$\len_l(w)$ (resp.~$\len_r(w)$) the length of the leftmost~(resp. rightmost) rewriting path from~$w$ to its normal form.

\subsection{Coherent presentations}
\label{SS:CoherentPresentation}

We recall the notion of coherent presentation of monoids formulated in terms of polygraphs in~\cite{GaussentGuiraudMalbos15}, see also~\cite{GuiraudMalbos18}.
Rewriting systems can be interpreted as $2$-polygraphs with only one $0$-cell. Such a \emph{$2$-polygraph} $P$ is a data~$(P_1,P_2)$, where~$P_1$ is a set and $P_2$ is a \emph{globular extension} of the free monoid~$P_1^\ast$ seen as a $1$-category. The elements of~$P_2$ are \emph{generating $2$-cells}~$\beta:u\dfl v$ relating $1$-cells in~$P_1^\ast$, with \emph{source}~$u$ and \emph{target}~$v$, denoted respectively by~$s_1(\beta)$  and~$t_1(\beta)$. 
A rewriting system~$R$ on~$X$ can be described by such a $2$-polygraph where the generating $2$-cells are the rules of~$R$. 
Recall that a \emph{$(2,1)$-category} is a category enriched in groupoids.  We will denote by~$\tck{P}_2$ the $(2,1)$-category freely generated by the $2$-polygraph $P$, see~\cite{GuiraudMalbos18} for expanded definitions.

A pair $(f,g)$ of $2$-cells of $\tck{P}_2$ such that  
$s_1(f)=s_1(g)$ and~$t_1(f)=t_1(g)$ is called a \emph{$2$-sphere} of $\tck{P}_2$. A \emph{$(3,1)$-polygraph} is a data~$(P,P_3)$ made of a $2$-polygraph $P$ and a globular extension $P_3$ of the $(2,1)$-category $\tck{P}_2$, that is a set of $3$-cells $A : f \tfl g$, where~$(f,g)$ is a $2$-sphere of~$\tck{P}_2$. The $2$-cell~$f$ (resp.~$g$) is called the source (resp. target) of~$A$, and denoted by $s_2(A)$ (resp.~$t_2(A)$). Such a $3$-cell can be represented with the following globular shape:
\[
\xymatrix @C=7em {
\Cdot
	\ar @/^3ex/ [r] ^-{u} _-{}="src"
	\ar @/_3ex/ [r] _-{v} ^-{}="tgt"
	\ar@2 "src"!<-15pt,-10pt>;"tgt"!<-15pt,10pt> _-*+{f} ^-{}="srcA"
	\ar@2 "src"!<+15pt,-10pt>;"tgt"!<+15pt,10pt> ^-*+{g} _-{}="tgtA"
	\ar@3 "srcA"!<8pt,0pt> ; "tgtA"!<-8pt,0pt> ^-{A}
&
\Cdot
}
\qquad\text{or}\qquad
\xymatrix@!C@C=3em{
u
	\ar@2@/^3ex/ [r] ^{f} _{}="src"
	\ar@2@/_3ex/ [r] _{g} ^{}="tgt"
& 
v
\ar@3 "src"!<0pt,-10pt>;"tgt"!<0pt,10pt> ^-{A}
}
\]
where $\cdot$ denotes the unique $0$-cell of $P$.
We will denote by $\tck{P}_3$ the free $(3,1)$-category generated by the $(3,1)$-polygraph $(P,P_3)$.
An \emph{extended presentation} of a monoid $\M$ is a  $(3,1)$-polygraph whose underlying $2$-polygraph is a presentation of $\M$. A \emph{coherent presentation of $\M$} is an extended presentation $(P, P_3)$ of~$\M$ such that the cellular extension $P_3$ of the $(2,1)$-category $\tck{P}_2$ is acyclic, that is, for every $2$-sphere~$(f,g)$ of~$\tck{P}_2$, there exists a $3$-cell $A$ in the $(3,1)$-category~$\tck{P}_3$ such that $s_2(A)=f$ and $t_2(A)=g$. The elements in~$\tck{P}_3$ are called \emph{syzygies} of the presentation~$P$.

Recall  Squier's coherence theorem from~\cite{Squier94}, see also~\cite{GuiraudMalbos18}, that  states that, any convergent rewriting system $R$ on $X$ presenting a monoid~$\M$ can be extended into a coherent presentation of~$\M$  having a generating syzygy
\[      
\xymatrix @R=0.4em @C=2.5em @!C {
        & v
        \ar@2@/^/  [dr] ^-{f'}
        \ar @3[]!<0pt,-10pt>;[dd]!<0pt,10pt> ^-*+{A_{f,g}}
        \\
        u
        \ar@2@/^/  [ur] ^-{f}
         \ar@2@/_/ [dr] _-{g}
        && w
        \\
        & v'
         \ar@2@/_/ [ur] _-{g'}
}
\]
for every critical branching~$(f,g)$ of~$R$, where~$f'$ and~$g'$ are chosen confluent rewriting paths.

\section{Insertions on Chinese staircases}
\label{S:CombinatoricsOfChineseStaircases}

In this section, we recall  the structure of Chinese staircase and the right and left insertion algorithms on Chinese staircases. 
We also recall the structure of Chinese monoid and the cross-section property for this monoid and we deduce properties of the insertions products on Chinese staircases.

\subsection{Chinese staircases}
\label{SS:Chinesestaircases}
A \emph{mirror Young diagram} of shape $(1,2,\ldots,n)$ is a collection of boxes in right-justified rows, whose rows (resp. columns) are indexed with  the totally ordered set~\mbox{$[n]:=\{1<\ldots<n\}$,} for $n$ in $\Zb_{>0}$, from top to bottom (resp. from right to left) and where the $i$-th row contains~$i$ boxes for~$1 \leq i \leq n$.
A \emph{(Chinese) staircase} over~$[n]$ is a mirror Young diagram of shape $(1,2,\ldots,n)$  filled with non-negative integers. Denote by~$t_{ij}$ (resp.~$t_i$) the contents of the box in row~$i$ and column~$j$ for~$i>j$ (resp. $i=j$). A box filled by~$0$ is called \emph{empty}.
Denote by~$\ChinT_n$ the set of staircases over~$[n]$ and by~$\R: \ChinT_n\to [n]^{\ast}$ the map that reads a staircase row by row, from right to left and from top to bottom, and where the $i$-th row is read as follows $(i1)^{t_{i1}}(i2)^{t_{i2}}\ldots(i(i-1))^{t_{i(i-1)}}(i)^{t_{i}}$, for~$1 \leq i \leq n$.
For instance, for the following staircase~$t$ over~$[4]$:
\begin{center}
\raisebox{+1cm}{
\ytableausetup{smalltableaux, boxsize=0.4cm}
\begin{ytableau}
\none & \none&\none& \scriptstyle t_1&  \none[\scriptstyle  1]\\
\none&\none& \scriptstyle t_2& \scriptstyle t_{21}& \none[\scriptstyle 2]\\
\none&\scriptstyle  t_3 & \scriptstyle t_{32}&\scriptstyle t_{31}& \none[\scriptstyle 3]\\
 \scriptstyle t_4& \scriptstyle t_{43}& \scriptstyle t_{42}& \scriptstyle t_{41}& \none[\scriptstyle 4]\\
\none[\scriptstyle 4]&\none[\scriptstyle 3]&\none[\scriptstyle 2]&\none[\scriptstyle 1]
\end{ytableau}
}
\end{center} 
we have~$\R(t)= 1^{t_1}(21)^{t_{21}}(2)^{t_2}(31)^{t_{31}}(32)^{t_{32}}(3)^{t_3}(41)^{t_{41}}(42)^{t_{42}}(43)^{t_{43}}(4)^{t_4}$.
By removing the bottom row of a staircase~$t$ over~$[n]$, we obtain a staircase over $[n-1]$, denoted by $t'$, as on the following picture:
\begin{center}
\raisebox{1cm}{$t\;=\;$}
\begin{tikzpicture}[scale=0.9]
\draw (-0.15,0.5)   -- (3,0.5);
\draw (-0.15,1)   -- (3,1);
\draw (0.5,1.5)   -- (1,1.5);
\draw[dashed] (1,2.5) -- (2.5,2.5);
\draw (2.5,3) -- (3,3);
\draw (-0.15,0.5)   -- (-0.15,1);
\draw (0.5,0.5) -- (0.5,1.5);
\draw[dashed] (1,1.5) -- (1,2.5);
\draw (2.5,2.5) -- (2.5,3);
\draw (3,0.5) -- (3,3);
\draw (2.5,0.5) -- (2.5,1);
\draw (1.25,0.4) node {};
\draw (3.2,2.8) node {$\scriptstyle 1$};
\draw (3.3,1.2) node {$\scriptstyle n-1$};
\draw (3.2,2) node {$\scriptstyle \vdots$};
\draw (3.2,0.6) node {$\scriptstyle n$};
\draw (0.2,0.7) node {$\scriptstyle t_{n}$};
\draw (2.75,0.7) node {$\scriptstyle t_{n1}$};
\draw (0.2,0.2) node {$\scriptstyle n$};
\draw (2.7,0.2) node {$\scriptstyle 1$};
\draw (1.7,0.2) node {$ \ldots$};
\draw (1.7,0.7) node {$ \ldots$};
\draw (1.7,1.7) node {$t'$};
\end{tikzpicture}
\end{center}
 According to this, such a staircase can be denoted by~$(t',R_1)$, where~$R_1$ is the bottom row of~$t$.

\subsection{The right insertion algorithm}
\label{SSS:Rightinsertion}
Recall the right insertion map~$\insr{\Ich} : \ChinT_n \times [n] \fl \ChinT_n$ introduced in~\cite{CassaigneEspieKrobNovelliHibert01}. Let~$t$ be a staircase and~$x$ an element in~$[n]$. If~$x=n$, then~\mbox{$t\insr{\Ich}x  = (t', R'_1)$,} where~$R'_1$ is obtained from~$R_1$ by adding~$1$ to~$t_n$. If~$x<n$, let~$y_1$ be maximal such that the entry in column~$y_1$ of $R_1$ is non-zero or if such a~$y_1$ does not exist, set~$y_1 = x$. Three cases appear:
\begin{enumerate}[\bf i)]
\item If~$x \geq y_1$, then~$t\insr{\Ich} x= (t'\insr{\Ich}x, R_1)$,
\item If~$x < y_1 < n$, then~$t\insr{\Ich}x =(t'\insr{\Ich}y_1, R'_1)$, where~$R'_1$ is obtained from~$R_1$ by subtracting~$1$ from~$t_{ny_{1}}$ and adding~$1$ to~$t_{nx}$,
\item If~$x < y_1 = n$, then~$t\insr{\Ich}x = (t', R'_1)$, where~$R'_1$ is obtained from $R_1$ by subtracting $1$ from $t_n$ and adding $1$ to~$t_{nx}$.
\end{enumerate}

For example, we compute 
$\big(\scalebox{1}{\raisebox{0.4cm}{
\ytableausetup{smalltableaux}
\begin{ytableau}
\none & \none&\none&\scriptstyle 1&\none[\scriptstyle 1]\\
\none&\none &\scriptstyle 1 &\scriptstyle 0 & \none[\scriptstyle 2]\\
\none&\scriptstyle 0 &\scriptstyle 1&\scriptstyle 1& \none[\scriptstyle 3]\\
\scriptstyle 0 &\scriptstyle 0 & \scriptstyle 2 &  \scriptstyle 0 & \none[\scriptstyle 4]\\
\none[\scriptstyle 4]&\none[\scriptstyle 3]&\none[\scriptstyle 2]&\none[\scriptstyle 1]
\end{ytableau}}}\insr{\Ich} 1\big)$
in three steps:
\[
\scalebox{1}{
\ytableausetup{smalltableaux}
\begin{ytableau}
\none &\none & \none&\none&\scriptstyle 1&\none[\scriptstyle 1]\\
\none &\none&\none &\scriptstyle 1 &\scriptstyle 0 & \none[\scriptstyle 2]\\
\none &\none&\scriptstyle 0 &\scriptstyle 1&\scriptstyle 1& \none[\scriptstyle 3]\\
\none &\scriptstyle 0 &\scriptstyle 0 & *(cyan)\scriptstyle 2 & *(red) \scriptstyle 0 & \none[\scriptstyle 4]&\none&\none[\insr{\Ich}]&\none &*(yellow) \scriptstyle 1\\
\none&\none[\scriptstyle 4]&\none[\scriptstyle 3]&\none[\scriptstyle 2]&\none[\scriptstyle 1]
\end{ytableau}
}\qquad
\underset{\fl}
\;
\scalebox{1}{
\ytableausetup{smalltableaux}
\begin{ytableau}
\none &\none & \none&\none&\scriptstyle 1&\none[\scriptstyle 1]\\
\none &\none&\none &\scriptstyle 1 &\scriptstyle 0 & \none[\scriptstyle 2]\\
\none &\none&\scriptstyle 0 & *(orange)\scriptstyle 1&\scriptstyle 1& \none[\scriptstyle 3]&\none&\none[\insr{\Ich}]&\none& *(green)\scriptstyle 2\\\none &\scriptstyle 0 &\scriptstyle 0 & *(cyan)\scriptstyle 1 & *(red) \scriptstyle 1 & \none[\scriptstyle  4]\\
\none&\none[\scriptstyle 4]&\none[\scriptstyle 3]&\none[\scriptstyle 2]&\none[\scriptstyle 1]
\end{ytableau}}
\qquad
\underset{\fl}
\;
\scalebox{1}{
\ytableausetup{smalltableaux}
\begin{ytableau}
\none &\none & \none&\none&\scriptstyle 1&\none[\scriptstyle 1]\\
\none &\none&\none & *(cyan)\scriptstyle 1 &\scriptstyle 0 & \none[\scriptstyle 2]&\none&\none[\insr{\Ich}]&\none& *(magenta)\scriptstyle 2\\
\none &\none&\scriptstyle 0 & *(orange)\scriptstyle 1&\scriptstyle 1& \none[\scriptstyle 3]\\
\none &\scriptstyle 0 &\scriptstyle 0 & *(cyan)\scriptstyle 1 & *(red) \scriptstyle 1 & \none[\scriptstyle 4]\\
\none&\none[\scriptstyle 4]&\none[\scriptstyle 3]&\none[\scriptstyle 2]&\none[\scriptstyle 1]
\end{ytableau}}
\qquad
\underset{\fl}
\;
\scalebox{1}{
\ytableausetup{smalltableaux}
\begin{ytableau}
\none &\none & \none&\none& 1&\none[\scriptstyle 1]\\
\none &\none&\none & *(cyan)\scriptstyle 2 &\scriptstyle 0 & \none[\scriptstyle 2]\\
\none &\none&\scriptstyle 0 & *(orange)\scriptstyle 1&\scriptstyle 1& \none[\scriptstyle 3]\\
\none &\scriptstyle 0 &\scriptstyle 0 & *(cyan)\scriptstyle 1 & *(red) \scriptstyle 1 & \none[\scriptstyle 4]\\
\none&\none[\scriptstyle 4]&\none[\scriptstyle 3]&\none[\scriptstyle 2]&\none[\scriptstyle 1]
\end{ytableau}}
\]

\subsection{The left insertion algorithm}
\label{SSS:leftInsertion}
A left insertion map~$\insl{\Jch} : \ChinT_n \times [n] \fl \ChinT_n$  that inserts an element~$x$ in~$[n]$ into a staircase~$t$, is defined in~\cite{CainGrayMalheiroChinese} in two steps as follows. Let~$y$ be an element in~$[n] \cup \{ \lambda \}$, initially set to~$\lambda$.

{\bf Step 1.} For~$i= 1,\ldots, x-1$, iterate the following. If every entry in the $i$-th row is empty, do nothing. Otherwise, let~$z$ be minimal such that~$t_{iz}$ is non-zero. There are two cases according to the values of~$y$:
\begin{enumerate}[\bf i)]
\item Suppose~$y = \lambda$. If~$z< i$, decrement~$t_{iz}$ by~$1$, increment~$t_{i}$ by~$1$, and set~$y = z$. If~$z = i$, decrement~$t_i$ by~$1$, and set~$y = z$.
\item Suppose~$y \neq \lambda$. If~$z< y$, decrement~$t_{iz}$ by~$1$, increment~$t_{iy}$ by~$1$, and set~$y = z$. If $z\geq y$, do nothing.
\end{enumerate}

{\bf Step 2.} For~$i = x$, if~$y = \lambda$, then increment~$t_{i}$ by~$1$. Otherwise, decrement~$t_{iy}$ by~$1$. 

For example, we compute 
$\big(4 \insl{\Jch} \scalebox{1}{\raisebox{0.4cm}{
\ytableausetup{smalltableaux}
\begin{ytableau}
\none &\none&\none&\scriptstyle 0 &\none[\scriptstyle 1]\\
\none&\none &\scriptstyle 1 &\scriptstyle 0 & \none[\scriptstyle 2]\\
\none&\scriptstyle 0 &\scriptstyle 1&\scriptstyle 1& \none[\scriptstyle 3]\\
\scriptstyle 0 &\scriptstyle 0 &\scriptstyle  2 &\scriptstyle  0 & \none[\scriptstyle 4]\\
\none[\scriptstyle 4]&\none[\scriptstyle 3]&\none[\scriptstyle 2]&\none[\scriptstyle 1]
\end{ytableau}}
}\big)$ in three steps:
\[
\scalebox{1}{
\ytableausetup{smalltableaux}
\begin{ytableau}
*(yellow)\scriptstyle 4&\none&\none[\insl{\Ich}] &\none &\none&\none&\scriptstyle 0 &\none[\scriptstyle 1]\\
\none &\none&\none &\none&\none &\scriptstyle 1 &\scriptstyle 0 & \none[\scriptstyle 2]\\
\none &\none&\none &\none&\scriptstyle 0 &\scriptstyle 1&\scriptstyle 1& \none[\scriptstyle 3]\\
\none &\none&\none &\scriptstyle 0 &\scriptstyle 0 &\scriptstyle  2 &\scriptstyle  0 & \none[\scriptstyle 4]\\
\none &\none&\none&\none[\scriptstyle 4]&\none[\scriptstyle 3]&\none[\scriptstyle 2]&\none[\scriptstyle 1]
\end{ytableau}}
\qquad
\underset{\fl}
\qquad
\scalebox{1}{
\ytableausetup{smalltableaux}
\begin{ytableau}
\none &\none & \none&\none&\scriptstyle 0 &\none[\scriptstyle 1]\\
\none &\none&\none & *(red)\scriptstyle0 &\scriptstyle 0 & \none[\scriptstyle 2]\\
\none &\none&\scriptstyle 0 &\scriptstyle  1&\scriptstyle 1& \none[\scriptstyle 3]\\
\none &\scriptstyle 0 &\scriptstyle 0 &\scriptstyle  1 &\scriptstyle  1 & \none[\scriptstyle 4]\\
\none&\none[\scriptstyle 4]&\none[\scriptstyle 3]&\none[\scriptstyle 2]&\none[\scriptstyle 1]
\end{ytableau}}
\qquad
\underset{\fl}
\qquad
\scalebox{1}{
\ytableausetup{smalltableaux}
\begin{ytableau}
\none &\none & \none&\none& 0&\none[\scriptstyle 1]\\
\none &\none&\none & *(red) \scriptstyle 1 &\scriptstyle 0 & \none[\scriptstyle 2]\\
\none &\none&\scriptstyle 0 & *(orange)\scriptstyle 2& *(cyan)\scriptstyle 0& \none[\scriptstyle 3]\\
\none &\scriptstyle 0 &\scriptstyle 0 &\scriptstyle  1 &\scriptstyle  1 & \none[\scriptstyle 4]\\
\none&\none[\scriptstyle 4]&\none[\scriptstyle 3]&\none[\scriptstyle 2]&\none[\scriptstyle 1]
\end{ytableau}}
\qquad
\underset{\fl}
\qquad
\scalebox{1}{
\ytableausetup{smalltableaux}
\begin{ytableau}
\none &\none & \none& \none&\scriptstyle 0&\none[\scriptstyle 1]\\
\none &\none& \none & *(red)\scriptstyle 1 &\scriptstyle 0 & \none[\scriptstyle 2]\\
\none &\none&\scriptstyle 0 & *(orange)\scriptstyle 2& *(cyan)\scriptstyle 0& \none[\scriptstyle 3]\\
\none &\scriptstyle 0 & \scriptstyle 0 &\scriptstyle  1 & *(magenta)\scriptstyle 1 & \none[\scriptstyle 4]\\
\none&\none[\scriptstyle 4]&\none[\scriptstyle 3]&\none[\scriptstyle 2]&\none[\scriptstyle 1]
\end{ytableau}}
\]

\subsection{Insertion products on Chinese staircases}
\label{SSS:InsertionProductChineseStaircases}

For any word $w=x_1\ldots x_k$, denote by~$\lecCol{w}_{\Ich}$ (resp.~$\lecCol{w}_{\Jch}$) the staircase obtained from~$w$ by inserting its letters iteratively from left to right (resp. right to left) using the right (resp. left) insertion starting from the empty staircase:
\[
\begin{array}{rl}
\lecCol{w}_{\Ich}
\;
:=&
\;
(\emptyset \insr{\Ich} w)
\;
=
\;((\ldots(\emptyset \insr{\Ich} x_1)  \insr{\Ich} \ldots )\insr{\Ich} x_k),\\
\big(\text{resp. }\lecCol{w}_{\Jch} 
\;
:=&
\;
(w\insl{\Jch} \emptyset )
\;
=
(x_1 \insl{\Jch} ( \ldots\insl{\Jch}  (x_k\insl{\Jch} \emptyset)\ldots))\big).
\end{array}
\]
Define now an internal product~$\star_{\Ich}$ (resp.~$\star_{\Jch}$) on $ \ChinT_n$ by setting 
\begin{equation}
\label{Eq:StructureMonoidProduct}
t \star_{\Ich} t' :=  (t\insr{\Ich} \R(t')),
\qquad
\big(\text{resp. }  t \star_{\Jch} t' :=  (\R(t')\insl{\Jch}t)\big)
\end{equation}
for all $t,t'$ in $\ChinT_n$.
By definition the relations $t\star_{\Ich} \emptyset = t$ (resp. $t\star_{\Jch} \emptyset = t$) and $\emptyset \star_{\Ich} t = t$ (resp.~$\emptyset \star_{\Jch} t = t$) hold, showing that the product~$\star_{\Ich}$ (resp.~$\star_{\Jch}$) is unitary with respect to~$\emptyset$.

\subsection{The cross-section property}
\label{SSS:Chinesemonoids}
The \emph{Chinese monoid of rank $n>0$},  denoted by~$\C_n$, is presented by the rewriting system on~$[n]$, whose rules are the \emph{Chinese relations},~\cite{DuchampKrob94}: 
\begin{equation}
\label{ChineseRelations}
\begin{array}{rl}
& zyx \fl yzx \quad \text{ and } \quad zxy \fl yzx \quad \text{for all} \quad 1\leq x< y < z \leq n,\\
&   yyx  \fl yxy\quad \text{ and } \quad  yxx \fl xyx  \quad \text{for all} \quad 1\leq x< y \leq n.
\end{array}
\end{equation}

These relations generate the \emph{Chinese congruence}, denoted by~$\sim_{\C_n}$, which can be also interpreted in terms of Chinese staircases as follows. The set of Chinese staircases satisfies the \emph{cross-section property} for the monoid~$\C_n$, 
that is, for all words $w,w'$ on $[n]$, $w\sim_{\C_n} w'$ if and only if~$\lecCol{w}_{\Ich} = \lecCol{w'}_{\Ich}$,~{\cite[Theorem 2.1]{CassaigneEspieKrobNovelliHibert01}}.
As a consequence of the cross-section property, we deduce the following result.

\begin{corollary}
\label{C:oppositeSDSAssociativitycondition}
The composition $\star_{\Ich}$ is associative and the following equality
\begin{equation}
\label{E:CommutationChinese}
y \insl{\Jch} (t \insr{\Ich} x )  
\:=\:
(y \insl{\Jch} t) \insr{\Ich} x
\end{equation}
 holds in~$\ChinT_n$, for all~$t$ in~$\ChinT_n$ and $x,y$ in $[n]$. In particular, the composition~$\star_{\Jch}$ is associative and the following relation
\begin{equation}
\label{OppositeEqu}
t\star_{\Ich} t' = t'\star_{\Jch} t
\end{equation}
holds for all $t,t'$ in~$\ChinT_n$. 
\end{corollary}

\section{Column presentation of the Chinese monoid}
\label{S:SemiQuadraticConvergentPresentationChineseMonoids}

We construct a finite semi-quadratic convergent presentation of the Chinese monoid~$\C_n$ by adding the columns over~$[n]$ of length at most~$2$ and square generators to the presentation~(\ref{ChineseRelations}) and by using the combinatorial properties of the insertion algorithms on the Chinese staircases.

\subsection{Column generators}
\label{SSS:columngenerators}
We consider one \emph{column generator~$c_{yx}$ of length $2$} for all~$1\leq x<y\leq n$, one \emph{column generator~$c_{x}$ of length $1$} for any $1\leq x\leq n$, and one \emph{square generator~$c_{xx}$} for any~$1 < x< n$, corresponding to the following three staircases:
\[
\scalebox{1}{
\begin{tikzpicture}[scale=0.7]
\draw (-0.15,0.5)   -- (4,0.5);
\draw (-0.15,1)   -- (4,1);
\draw[dashed] (0.5,1.5)   -- (1,1.5);
\draw (3.5,4)   -- (4,4);
\draw[dashed] (2,2.5) -- (4,2.5);
\draw[dashed] (2.5,3) -- (4,3);
\draw[dashed] (1,1.7) -- (4,1.7);
\draw[dashed] (1,2.1) -- (4,2.1);
\draw[dashed] (3.3,3.5) -- (3.5,3.5);
\draw (-0.15,0.5)   -- (-0.15,1);
\draw[dashed] (0.5,0.5) -- (0.5,1.5);
\draw[dashed] (2.5,0.5) -- (2.5,3);
\draw[dashed] (3.3,0.5) -- (3.3,3.5);
\draw (2.5,0.5) -- (2.5,1);
\draw (4,0.5) -- (4,4);
\draw[dashed] (1,0.5) -- (1,2.1);
\draw[dashed] (2,0.5) -- (2, 2.5);
\draw[dashed] (1,0.5) -- (1,1);
\draw (3.5,3.5) -- (3.5,4);
\draw (4.2,3.8) node {$\scriptstyle 1$};
\draw (4.2,2.8) node {$\scriptstyle x$};
\draw (4.2,2.6) node {$\scriptstyle \vdots$};
\draw (4.2,1.85) node {$\scriptstyle y$};
\draw (2.9,1.85) node {$\scriptstyle 1$};
\draw (4.2,1.5) node {$\scriptstyle \vdots$};
\draw (4.2,0.6) node {$\scriptstyle n$};
\draw (0.2,0.2) node {$\scriptstyle n$};
\draw (2.8,0.2) node {$\scriptstyle x$};
\draw (3.9,0.2) node {$\scriptstyle 1$};
\draw (1.5,0.2) node {$\scriptstyle y$};
\draw (2.1,0.2) node {$\scriptstyle \ldots$};
\draw (0.8,0.2) node {$\scriptstyle \ldots$};
\draw (3.3,0.2) node {$\scriptstyle \ldots$};
\draw (0.8,0.7) node {$\scriptstyle \ldots$};
\draw (2.2,0.7) node {$\scriptstyle \ldots$};
\draw (2.2,1.3) node {$\scriptstyle \ldots$};
\draw (2.2,2.2) node {$\scriptstyle \ldots$};
\draw (2.2,1.9) node {$\scriptstyle \ldots$};
\draw (3.6,0.7) node {$\scriptstyle \ldots$};
\draw (3.6,1.3) node {$\scriptstyle \ldots$};
\draw (3.6,2.2) node {$\scriptstyle \ldots$};
\draw (3.6,1.9) node {$\scriptstyle \ldots$};
\draw (3.6,3.2) node {$\scriptstyle \ldots$};
\draw (3.6,2.6) node {$\scriptstyle \ldots$};
\draw (3,2.7) node {$\scriptstyle t_{x}$};
\draw (4.2,3.5) node {$\scriptstyle \vdots$};
\draw[fill=cyan]  (3.3,0.5)-- (3.3,1)-- (4,1)-- (4,0.5) -- cycle;
\draw[dashed, fill=cyan]  (3.3,1)-- (3.3,1.7)-- (4,1.7)-- (4,1) -- cycle;
\draw[dashed, fill=cyan]  (3.3,1.7)-- (3.3,2.1)-- (4,2.1)-- (4,1.7) -- cycle;
\draw[dashed, fill=cyan]  (3.3,2.1)-- (3.3,2.5)-- (4,2.5)-- (4,2.1) -- cycle;
\draw[dashed, fill=cyan]  (3.3,2.5)-- (3.3,3)-- (4,3)-- (4,2.5) -- cycle;
\draw[dashed, fill=cyan]  (3.3,3)-- (3.3,3.5)--(3.5,3.5)--(3.5,4)-- (4,4)-- (4,3) -- cycle;
\draw[ fill=cyan]  (2.5,0.5)-- (2.5,1) --(3.3,1)-- (3.3,0.5)--  cycle;
\draw[dashed, fill=cyan]  (2.5,1)-- (2.5,1.7) --(3.3,1.7)-- (3.3,1)--  cycle;
\draw[dashed, fill=cyan]  (2.5,2.1)-- (2.5,2.5)-- (3.3,2.5)-- (3.3,2.1) -- cycle;
\draw[dashed, fill=cyan]  (2.5,2.5)-- (2.5,3)-- (3.3,3)-- (3.3,2.5) -- cycle;
\draw[fill=cyan]  (2,0.5)-- (2,1) --(2.5,1)-- (2.5,0.5)--  cycle;
\draw[dashed, fill=cyan]  (2,1)-- (2,1.7) --(2.5,1.7)-- (2.5,1)--  cycle;
\draw[dashed, fill=cyan]  (2,1.7)-- (2,2.1) --(2.5,2.1)-- (2.5,1.7)--  cycle;
\draw[dashed, fill=cyan]  (2,2.1)-- (2,2.5)-- (2.5,2.5)-- (2.5,2.1) -- cycle;
\draw[ fill=cyan]  (1,0.5)-- (1,1) --(2,1)-- (2,0.5)--  cycle;
\draw[dashed, fill=cyan]  (1,1)-- (1,1.7) --(2,1.7)-- (2,1)--  cycle;
\draw[dashed, fill=cyan]  (1,1.7)-- (1,2.1) --(2,2.1)-- (2,1.7)--  cycle;
\draw[ fill=cyan]  (0.5,0.5)-- (0.5,1) --(1,1)-- (1,0.5)--  cycle;
\draw[dashed, fill=cyan]  (0.5,1)-- (0.5,1.6) --(1,1.6)-- (1,1)--  cycle;
\draw[dashed, fill=cyan]  (0.5,1)-- (0.5,1.6) --(1,1.6)-- (1,1)--  cycle;
\draw[ fill=cyan]  (-0.2,0.5)-- (-0.2,1) --(0.5,1)-- (0.5,0.5)--  cycle;
\end{tikzpicture}}
\qquad\qquad
\scalebox{1}{
\begin{tikzpicture}[scale=0.7]
\draw (-0.15,0.5)   -- (3,0.5);
\draw (-0.15,1)   -- (3,1);
\draw (0.5,1.5)   -- (1,1.5);
\draw[dashed] (1.7,2.5) -- (2.5,2.5);
\draw (2.5,3) -- (3,3);
\draw (-0.15,0.5)   -- (-0.15,1);
\draw (0.5,0.5) -- (0.5,1.5);
\draw[dashed] (1,1.5) -- (1,2.1);
\draw (2.5,2.5) -- (2.5,3);
\draw (3,0.5) -- (3,3);
\draw (2.5,0.5) -- (2.5,1);
\draw[dashed] (1,0.5) -- (1,2.1);
\draw[dashed] (1.7,0.5) -- (1.7,2.5);
\draw (1,0.5) -- (1,1);
\draw[dashed] (1,1.7) -- (3,1.7);
\draw[dashed] (1,2.1) -- (3,2.1);
\draw (1.25,0.4) node {};
\draw (3.2,2.8) node {$\scriptstyle 1$};
\draw (3.2,2.4) node {$\scriptstyle \vdots$};
\draw (3.2,1.85) node {$\scriptstyle x$};
\draw (3.2,1.5) node {$\scriptstyle \vdots$};
\draw (3.2,0.6) node {$\scriptstyle n$};
\draw (0.2,0.2) node {$\scriptstyle n$};
\draw (2.7,0.2) node {$\scriptstyle 1$};
\draw (1.3,0.2) node {$\scriptstyle x$};
\draw (2.1,0.2) node {$\scriptstyle \ldots$};
\draw (0.8,0.2) node {$\scriptstyle \ldots$};
\draw (0.8,0.7) node {$\scriptstyle \ldots$};
\draw (2.1,0.7) node {$\scriptstyle \ldots$};
\draw (1.4,1.9) node {$\scriptstyle  1$};
\draw[dashed, fill=cyan]  (1.7,1.7)-- (1.7,2.1)-- (3,2.1)-- (3,1.7) -- cycle;
\draw[dashed, fill=cyan]  (1.7,2.1)-- (1.7,2.5)-- (2.5,2.5)-- (2.5,3)-- (3,3)-- (3,2.1) -- cycle;
\draw[dashed, fill=cyan]  (1.7,1)-- (1.7,1.7)-- (3,1.7)-- (3,1) -- cycle;
\draw[dashed, fill=cyan]  (1,1)-- (1,1.7)-- (1.7,1.7)-- (1.7,1) -- cycle;
\draw[dashed, fill=cyan]  (0.5,1)-- (0.5,1.5)-- (1,1.5)-- (1,1) -- cycle;
\draw[dashed, fill=cyan]  (1.7,0.5)-- (1.7,1)-- (3,1)-- (3,0.5) -- cycle;
\draw[fill=cyan]  (1.7,0.5)-- (1.7,1)-- (3,1)-- (3,0.5) -- cycle;
\draw[fill=cyan]  (1,0.5)-- (1,1) --(1.7,1)-- (1.7,0.5)--  cycle;
\draw[fill=cyan]  (0.5,0.5)-- (0.5,1) --(1,1)-- (1,0.5)--  cycle;
\draw[fill=cyan]  (-0.2,0.5)-- (-0.2,1) --(0.5,1)-- (0.5,0.5)--  cycle;
\end{tikzpicture}
}
\qquad\qquad
\scalebox{1}{
\begin{tikzpicture}[scale=0.7]
\draw (-0.15,0.5)   -- (3,0.5);
\draw (-0.15,1)   -- (3,1);
\draw (0.5,1.5)   -- (1,1.5);
\draw[dashed] (1.7,2.5) -- (2.5,2.5);
\draw (2.5,3) -- (3,3);
\draw (-0.15,0.5)   -- (-0.15,1);
\draw (0.5,0.5) -- (0.5,1.5);
\draw[dashed] (1,1.5) -- (1,2.1);
\draw (2.5,2.5) -- (2.5,3);
\draw (3,0.5) -- (3,3);
\draw (2.5,0.5) -- (2.5,1);
\draw[dashed] (1,0.5) -- (1,2.1);
\draw[dashed] (1.7,0.5) -- (1.7,2.5);
\draw (1,0.5) -- (1,1);
\draw[dashed] (1,1.7) -- (3,1.7);
\draw[dashed] (1,2.1) -- (3,2.1);
\draw (1.25,0.4) node {};
\draw (3.2,2.8) node {$\scriptstyle 1$};
\draw (3.2,2.4) node {$\scriptstyle \vdots$};
\draw (3.2,1.85) node {$\scriptstyle x$};
\draw (3.2,1.5) node {$\scriptstyle \vdots$};
\draw (3.2,0.6) node {$\scriptstyle n$};
\draw (0.2,0.2) node {$\scriptstyle n$};
\draw (2.7,0.2) node {$\scriptstyle 1$};
\draw (1.3,0.2) node {$\scriptstyle x$};
\draw (2.1,0.2) node {$\scriptstyle \ldots$};
\draw (0.8,0.2) node {$\scriptstyle \ldots$};
\draw (0.8,0.7) node {$\scriptstyle \ldots$};
\draw (2.1,0.7) node {$\scriptstyle \ldots$};
\draw (1.4,1.9) node {$\scriptstyle  2$};
\draw[dashed,fill=cyan]  (1.7,1.7)-- (1.7,2.1)-- (3,2.1)-- (3,1.7) -- cycle;
\draw[dashed, fill=cyan]  (1.7,2.1)-- (1.7,2.5)-- (2.5,2.5)-- (2.5,3)-- (3,3)-- (3,2.1) -- cycle;
\draw[dashed,fill=cyan]  (1.7,1)-- (1.7,1.7)-- (3,1.7)-- (3,1) -- cycle;
\draw[dashed,fill=cyan]  (1,1)-- (1,1.7)-- (1.7,1.7)-- (1.7,1) -- cycle;
\draw[dashed,fill=cyan]  (0.5,1)-- (0.5,1.5)-- (1,1.5)-- (1,1) -- cycle;
\draw[dashed,fill=cyan]  (1.7,0.5)-- (1.7,1)-- (3,1)-- (3,0.5) -- cycle;
\draw[fill=cyan] (1.7,0.5)-- (1.7,1)-- (3,1)-- (3,0.5) -- cycle;
\draw[fill=cyan] (1,0.5)-- (1,1) --(1.7,1)-- (1.7,0.5)--  cycle;
\draw[fill=cyan] (0.5,0.5)-- (0.5,1) --(1,1)-- (1,0.5)--  cycle;
\draw[fill=cyan] (-0.2,0.5)-- (-0.2,1) --(0.5,1)-- (0.5,0.5)--  cycle;
\end{tikzpicture}}
\]
where the shaded areas represent empty boxes. We will denote by~$Q_n$ the set defined by
\[
Q_n\: := \:\big\{c_{yx} \; \big| \; 1\leq x<y\leq n \big\}\cup \big\{c_{xx} \; \big| \; 1 < x< n \big\} \cup \big\{c_1,\ldots, c_n \big\}.
\]

Let us define the map~$R_{Q_n}: \ChinT_n\to Q_n^{\ast}$ that reads a staircase row by row, from right to left and from top to bottom, and where the reading of the $i$-th row, for~$1 \leq i \leq n$,  is the following word in~$ Q_n^{\ast}$:
\[
\left\{
    \begin{array}{ll}
  \underbrace{c_{i1}\Cdot\ldots \Cdot c_{i1}}_\textrm{$t_{i1}$ times }\Cdot\underbrace{c_{i2}\Cdot\ldots \Cdot c_{i2}}_\textrm{$t_{i2}$ times }\Cdot\ldots\Cdot
c_{i}\Cdot\underbrace{c_{ii}\Cdot\ldots \Cdot c_{ii}}_\textrm{$\frac{1}{2} (t_{i}-1)$ times}         & \mbox{ when } t_{i}  \text{ is an odd number} \\
    \underbrace{c_{i1}\Cdot\ldots \Cdot c_{i1}}_\textrm{$t_{i1}$ times }\Cdot\underbrace{c_{i2}\Cdot\ldots \Cdot c_{i2}}_\textrm{$t_{i2}$ times }\Cdot\ldots\Cdot
\underbrace{c_{ii}\Cdot\ldots \Cdot c_{ii}}_\textrm{$\frac{1}{2} t_{i}$ times}     & \mbox{ when  } t_{i}  \text{ is an even  number.}
    \end{array}
\right.
\]
For instance,  consider  the following staircase over~$[4]$: 
\[
\raisebox{-0.5cm}{$t\;=\;$}
\ytableausetup{smalltableaux}
\begin{ytableau}
\none &\none&\none&\scriptstyle 1 &\none[\scriptstyle 1]\\
\none&\none &\scriptstyle 3 &\scriptstyle 0 & \none[\scriptstyle 2]\\
\none&\scriptstyle 0 &\scriptstyle 1&\scriptstyle 3& \none[\scriptstyle 3]\\
\scriptstyle 4 &\scriptstyle 0 &\scriptstyle  2 &\scriptstyle  1 & \none[\scriptstyle 4]\\
\none[\scriptstyle 4]&\none[\scriptstyle 3]&\none[\scriptstyle 2]&\none[\scriptstyle 1]
\end{ytableau}
\quad
\raisebox{-0.5cm}{$\text{ with }R_{Q_n}(t) = c_1\Cdot c_2\Cdot c_{22}\Cdot c_{31}\Cdot c_{31}\Cdot c_{31}\Cdot c_{32}\Cdot c_{41}\Cdot c_{42}\Cdot c_{42}\Cdot c_{44}\Cdot c_{44}$.}
\]

\subsection{Reduced column presentation} 
\label{SSS:reducedcolumnpresentation}
We denote by~$\Srs(Q_n, \C_n)$ the rewriting system on~$Q_n$ whose rules are 
\[
\gamma_{u,v} : c_u\Cdot c_v \fl R_{Q_n}(c_u\star_{\Ich} c_v)
\]
 for all~$c_u$ and~$c_v$ in~$Q_n$ such that~$c_u\Cdot c_v \neq R_{Q_n}(c_u\star_{\Ich} c_v)$. Normal forms with respect to this rewriting system are called \emph{Chinese normal forms}.
Note that the leftmost and rightmost reductions are the only reductions on a word $c_u\Cdot c_v\Cdot c_t$ in $Q_n^\ast$ with respect to~$\Srs(Q_n, \C_n)$. There will be denoted respectively by 
\begin{equation}
\label{E:ReductionsLeftRight}
\gamma_{\widehat{u,v},t}:=\gamma_{u,v}\Cdot c_t
\qquad
\text{ and }
\qquad
\gamma_{u,\widehat{v,t}}:=c_u\Cdot \gamma_{v,t}.
\end{equation}

\begin{theorem}
\label{Theorem:FSQCPChinese}
The rewriting system~$\Srs(Q_n, \C_n)$ is a finite semi-quadratic convergent presentation of the Chinese monoid~$\C_n$.
\end{theorem}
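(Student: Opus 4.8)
The plan is to obtain the three assertions — finiteness, convergence together with the fact that $\Srs(Q_n,\SdsC_n)$ presents $\C_n$, and semi-quadraticity — from the general results of Section~\ref{S:SemiQuadraticCoherentPresentationsAndSDS}, so that all real work is concentrated in the last property. \textbf{Finiteness} is immediate: $Q_n$ is a finite set, and the rules of $\Srs(Q_n,\SdsC_n)$ are indexed by the finitely many pairs $(c_u,c_v)$ in $Q_n\times Q_n$ for which $c_u\Cdot c_v\neq R_{Q_n}(c_u\star_{C_r}c_v)$.

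For \textbf{convergence and the identification with $\C_n$} I would argue as follows. By Theorem~\ref{T:CommutationInsertionChinese} the insertions $\Ich$ and $\Jch$ commute, so $(\ChinT_n,\Ich,\Jch,R_r)$ is a string data bistructure over $[n]$ and Theorem~\ref{Theorem:oppositeSDSAssociativitycondition} ensures that $\star_{\Ich}$ is associative; hence $\SdsC_n$ is a right associative string data structure. Since $\ChinT_n$ satisfies the cross-section property for $\C_n$ (recalled from~\cite{CassaigneEspieKrobNovelliHibert01}), the structure $\SdsC_n$ satisfies the cross-section property for the Chinese congruence $\approx_{\C_n}$ in the sense of~\ref{SSS:CrossSectionProperty}, and Theorem~\ref{T:MonoidsBijection} then gives that $\SdsC_n$ presents $\C_n$, i.e. $\M(\ChinT_n,\Ich)\simeq\C_n$. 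By the preceding Lemma, $Q_n$ is a well-founded generating set of $\SdsC_n$, so Proposition~\ref{P:reducedSDS} applies and yields that $\Srs(Q_n,\SdsC_n)$ is a convergent presentation of $\M(\ChinT_n,\Ich)\simeq\C_n$, and that $\normf(Q_n,\SdsC_n)$ satisfies the cross-section property for $\C_n$, giving the \emph{Chinese normal forms}.

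It remains to show that $\Srs(Q_n,\SdsC_n)$ is \textbf{semi-quadratic}. Every rule $\gamma_{u,v}$ has source $c_u\Cdot c_v$ of length $2$, so it suffices to prove $|R_{Q_n}(c_u\star_{C_r}c_v)|\leq 2$ for all $c_u,c_v\in Q_n$. I would first record the elementary but decisive observation that one application of $\Ich$ increases the total number of letters of a staircase — counting an off-diagonal box of content $t$ as $2t$ letters and a diagonal box of content $t$ as $t$ letters — by exactly one; this is checked directly against cases~\textbf{i)}, \textbf{ii)} and~\textbf{iii)} of~\ref{SSS:Rightinsertion} (and the case $x=n$). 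Since every generator of $Q_n$ carries at most $2$ letters and $R_r(c_v)$ has length at most $2$ for every $c_v\in Q_n$, the staircase $c_u\star_{C_r}c_v=I_{\readLR}(c_u,R_r(c_v))$ carries at most $4$ letters. A finite case analysis on its shape then finishes the argument: because each $c_u$ is supported on a single box and the insertion of the one or two letters of $R_r(c_v)$ is local, the results fall into a bounded list of generic configurations (a single off-diagonal box of content $\leq 2$; a single diagonal box of content $\leq 4$; two off-diagonal boxes of content $1$ in distinct rows; one off-diagonal box of content $1$ together with a diagonal box of content $\leq 2$; and so on), and for each of these the reading $R_{Q_n}$ — which collapses a diagonal box of content $t$ into $\lceil t/2\rceil$ generators via the square generators $c_{xx}$ — produces a word of length at most $2$.

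The crux, and the step I expect to be the main obstacle, is to exclude the a priori dangerous configurations in which inserting $R_r(c_v)$ scatters content over three distinct diagonal boxes — which would read as $c_i\Cdot c_j\Cdot c_k$, of length $3$ — or over two off-diagonal boxes plus a diagonal one. Ruling these out requires tracking precisely how the two inserted letters of $R_r(c_v)$ interact, through the bumping of~\ref{SSS:Rightinsertion}, with the unique non-empty box of $c_u$; here the reading convention fixing $R_r$ on the two-letter column generators $c_{yx}$ and the presence of the square generators in $Q_n$ are exactly what force the entries moved by the insertion to coalesce into at most two $Q_n$-generators rather than three. Completing this bookkeeping over the finitely many residual cases proves semi-quadraticity, and hence the theorem.
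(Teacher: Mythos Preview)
Your argument is logically sound, but it takes a different route from the paper on the central point and, as a consequence, has to work harder for semi-quadraticity.

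For the identification of the presented monoid with $\C_n$, you import the cross-section property of Chinese staircases from~\cite{CassaigneEspieKrobNovelliHibert01} and feed it through Theorem~\ref{T:MonoidsBijection} and Proposition~\ref{P:reducedSDS}. The paper deliberately avoids this: the remark immediately after the theorem explains that the cross-section property is meant to be a \emph{consequence} of Theorem~\ref{Theorem:FSQCPChinese}, not an input. Instead, the paper proceeds by explicit Tietze transformations: starting from the Chinese relations~(\ref{ChineseRelations}), it adjoins the column and square generators with their defining rules, produces the precolumn presentation $\PreColo_2(n)$ of Lemma~\ref{PrecolonnePresentation}, and then shows in~\ref{SSS:CompletionPrecolumnpresentation} that one round of Knuth--Bendix completion of $\PreColo_2(n)$ with respect to $\ordrecolChinese$ yields exactly the rule set of $\Srs(Q_n,\SdsC_n)$. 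Convergence then follows from Proposition~\ref{P:reducedSDS} as in your argument.

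This difference has a practical payoff for semi-quadraticity. Because the paper arrives at $\Srs(Q_n,\SdsC_n)$ by explicitly enumerating its rules (those of $\PreColo_2(n)$ together with the seven families of dotted completions in~\ref{SSS:CompletionPrecolumnpresentation}), the inequality $|R_{Q_n}(c_u\star_{C_r}c_v)|\leq 2$ is read off directly from that list, with no separate analysis required. Your letter-counting approach is reasonable, but the ``finitely many residual cases'' you allude to are precisely the content you would still have to supply; in particular, the cases where $c_u$ or $c_v$ involve the extreme indices $1$ or $n$ (where no square generator is available) need care, and the bookkeeping you describe essentially reconstructs the rule families the paper already tabulates. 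Both approaches work, but the paper's Tietze route is more economical and makes the theorem self-contained.
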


Theorem~\ref{Theorem:FSQCPChinese} will be proved later in the section. First, we deduce the following corollary:

\begin{corollary}
\label{Corollary:crossSectionProprty}
The following properties hold:
\begin{enumerate}[\bf i)]
\item The monoid~$\C_n$ has finite derivation type~$\mathrm{FDT}_{\infty}$. 
\item The monoid~$\C_n$ has finite homological type~$\mathrm{FP}_{\infty}$. 
\end{enumerate}
\end{corollary}

\begin{proof}
In~\cite{GuiraudMalbos12advances} the authors showed that if a monoid admits a finite convergent presentation, then it is of finite derivation type~$\mathrm{FDT}_{\infty}$, and the property of finite derivation type implies the property of finite homological type~$\mathrm{FP}_{\infty}$. Thus, Conditions~{\bf i)} and~{\bf ii)} are consequences of Theorem~\ref{Theorem:FSQCPChinese}.
\end{proof}

The rest of this section is devoted to the proof of Theorem~\ref{Theorem:FSQCPChinese}.
First, prove that~$\Srs(Q_n, \C_n)$ is a semi-quadratic presentation of the monoid~$\C_n$. We add in Subsection~\ref{SubSubsection:Precolumn presentation} the columns generators of length~$2$ and the square generators with their defining rules. This forms a non-confluent rewriting system that we complete into a presentation of~$\C_n$, that we call the \emph{precolumn presentation}. Then we show in Subsection~\ref{SSS:CompletionPrecolumnpresentation} that the rules of~$\Srs(Q_n, \C_n)$ are obtained from the precolumn presentation by applying one step of Knuth-Bendix's completion, \cite{KnuthBendix70}, on the precolumn presentation. Hence~$\Srs(Q_n, \C_n)$ is a presentation of the monoid~$\C_n$. Finally, we show in Proposition~\ref{Proposition:ConvergenceColumnPresentation} that~$\Srs(Q_n, \C_n)$ is terminating and  is confluent using the associativity of the product~$\star_{\Ich}$.

\subsection{Precolumn presentation}
\label{SubSubsection:Precolumn presentation}
Consider the rewriting system $\Chin_2(n)$ on~$\{c_{1},\ldots,c_{n}\}$ and whose rules are given by the following four families
\begin{equation}
\label{ChineseRelationsColumn}
\begin{array}{rl}
&\epsilon_{x,y,z}: c_z\Cdot c_y\Cdot c_x \fl c_y\Cdot c_z\Cdot c_x\;  \text{ and }\; \eta_{x,y,z}: c_z\Cdot c_x\Cdot c_y \fl c_y\Cdot c_z\Cdot c_x \quad \text{for all} \quad 1\leq x< y < z \leq n,\\
& \epsilon_{x,y}:  c_y\Cdot c_y\Cdot c_x  \fl c_y\Cdot c_x\Cdot c_y\; \text{ and }\;  \eta_{x,y}: c_y\Cdot c_x\Cdot c_x\fl c_x\Cdot c_y\Cdot c_x  \quad \text{for all} \quad 1\leq x< y \leq n,
\end{array}
\end{equation}
corresponding to the Chinese relations (\ref{ChineseRelations}), hence is a presentation of the monoid $\C_n$.
We add to the set of rules~(\ref{ChineseRelationsColumn}) the following set of rules
\[
\Gamma_2(n) =
\{\;\gamma_{y,x}:c_{y}\Cdot c_{x} \fl c_{yx} \;|\; 1\leq x< y  \leq n\;\} 
\cup
\{\;\gamma_{x,x}:c_{x}\Cdot c_{x} \fl c_{xx} \;|\; 1< x< n\;\},
\]
making a rewriting system $\Chin^{c}_2(n) =  \Gamma_2(n) \cup \Chin_2(n)$ on $Q_n$ that presents the monoid $\C_n$. 

\begin{lemma} 
\label{PrecolonnePresentation}
For~$n>0$, the rewriting system~$\PreColo_2(n)$ on~$Q_n$, whose set of rules is $\Gamma_2(n)\,\cup \, \Delta_2(n)$, where
\[
\begin{array}{rl}
\Delta_2(n) = &\,\{\;\gamma_{y,yx}: c_y\Cdot c_{yx}\fl c_{yx}\Cdot c_{y} \;\text{ for }\; 1\leq x<y\leq n\;\text{ and }\;\gamma_{yy,x} : c_{yy}\Cdot c_x \fl c_{yx}\Cdot c_y \;\text{ for }\; 1\leq x<y<n \} 
\\
& \,\cup \,\{\;\gamma_{zy,x}: c_{zy}\Cdot c_x\fl c_{y}\Cdot c_{zx} \; \text{ and } \;\gamma_{z,yx}: c_{z}\Cdot c_{yx}\fl c_{y}\Cdot c_{zx}
\;\text{ for }\; 1\leq x\leq y<z\leq n\;\}
\\
& \,\cup \,\{\;\gamma_{zx,y}: c_{zx}\Cdot c_y\fl c_{y}\Cdot c_{zx}\;\text{ for }\; 1\leq x< y<z\leq n\;\}
.
\end{array}
\]
 is a finite semi-quadratic presentation of the Chinese monoid~$\C_n$. 
\end{lemma}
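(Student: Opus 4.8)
The plan is to derive the statement from the fact, just established, that $\Chin^{c}_2(n)=\Gamma_2(n)\cup\Chin_2(n)$ is a presentation of $\C_n$ on the alphabet $Q_n$. Finiteness of $\PreColo_2(n)=\Gamma_2(n)\cup\Delta_2(n)$ is immediate, and it is semi-quadratic since every rule of $\Gamma_2(n)\cup\Delta_2(n)$ has a source of length $2$ and a target of length at most $2$. It therefore remains to prove that $\PreColo_2(n)$ presents $\C_n$. Both $\PreColo_2(n)$ and $\Chin^{c}_2(n)$ are rewriting systems on $Q_n$ and both contain $\Gamma_2(n)$, so it suffices to show that the congruences $\approx_{\PreColo_2(n)}$ and $\approx_{\Chin^{c}_2(n)}$ on $Q_n^\ast$ coincide. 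Using the second kind of elementary Tietze transformation, this amounts to checking that each rule of $\Delta_2(n)$ relates two $\approx_{\Chin^{c}_2(n)}$-equivalent words, and that each rule of $\Chin_2(n)$ relates two $\approx_{\PreColo_2(n)}$-equivalent words; granting these, one passes from $\Chin^{c}_2(n)$ to $\Gamma_2(n)\cup\Chin_2(n)\cup\Delta_2(n)$ by adjoining the rules of $\Delta_2(n)$, and then to $\PreColo_2(n)$ by deleting the rules of $\Chin_2(n)$.

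For the inclusion $\approx_{\PreColo_2(n)}\subseteq\approx_{\Chin^{c}_2(n)}$, I would expand, in each rule of $\Delta_2(n)$, every column generator $c_{vu}$ and square generator $c_{uu}$ into $c_v c_u$, resp. $c_u c_u$, using the rules of $\Gamma_2(n)$ backwards, so that the rule becomes an equivalence between words on $c_1,\dots,c_n$, and then recognize that equivalence among the Chinese relations. Thus $\gamma_{y,yx}$ and $\gamma_{yy,x}$ both reduce to $c_y c_y c_x\approx c_y c_x c_y$, an instance of $\epsilon_{x,y}$; the rule $\gamma_{zx,y}$ reduces to $c_z c_x c_y\approx c_y c_z c_x$, an instance of $\eta_{x,y,z}$; and $\gamma_{zy,x}$ and $\gamma_{z,yx}$ both reduce to $c_z c_y c_x\approx c_y c_z c_x$, which is $\epsilon_{x,y,z}$ when $x<y$ and, in the case $x=y$ where $c_{yx}$ is the square $c_{xx}$, the relation $c_z c_x c_x\approx c_x c_z c_x$, an instance of $\eta_{x,z}$.

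For the reverse inclusion I would recover each Chinese relation on $c_1,\dots,c_n$ by showing that its two sides rewrite, under $\Gamma_2(n)\cup\Delta_2(n)$, to a common word obtained by contracting a length-two factor into a column generator and then applying one rule of $\Delta_2(n)$. For $\epsilon_{x,y,z}$, both $c_z c_y c_x$ and $c_y c_z c_x$ reduce to $c_y c_{zx}$, the former by $\gamma_{y,x}$ then $\gamma_{z,yx}$, the latter by $\gamma_{z,x}$. For $\eta_{x,y,z}$, both $c_z c_x c_y$ and $c_y c_z c_x$ reduce to $c_y c_{zx}$, by $\gamma_{z,x}$ then $\gamma_{zx,y}$, resp. by $\gamma_{z,x}$. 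For $\epsilon_{x,y}$, both $c_y c_y c_x$ and $c_y c_x c_y$ reduce to $c_{yx} c_y$. Finally, for $\eta_{x,y}$, both $c_y c_x c_x$ and $c_x c_y c_x$ reduce to $c_x c_{yx}$, using in particular the instance of $\gamma_{zy,x}$ with parameters $(z,y,x)=(y,x,x)$.

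The only genuine difficulty is the bookkeeping of the index ranges, since square generators $c_{uu}$ exist only for $1<u<n$ whereas the length-two columns $c_{vu}$ exist for all $1\le u<v\le n$: for this reason the derivations of $\epsilon_{x,y}$ and $\eta_{x,y}$ above must be routed through columns rather than through squares, and in the case $x=y$ of $\gamma_{zy,x}$ and $\gamma_{z,yx}$ one must invoke $\eta_{x,z}$ in place of $\epsilon_{x,y,z}$. One should finally verify, in each of these derivations, that the rules of $\Gamma_2(n)$ and $\Delta_2(n)$ used are indeed present for the prescribed ranges of $x,y,z$. With this checked, the two congruences agree and $\PreColo_2(n)$ is a finite semi-quadratic presentation of $\C_n$.
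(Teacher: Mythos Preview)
Your proposal is correct and follows essentially the same approach as the paper: both establish a Tietze equivalence between $\Chin^{c}_2(n)$ and $\PreColo_2(n)$ by showing that each rule of $\Delta_2(n)$ is derivable from $\Gamma_2(n)\cup\Chin_2(n)$ and, conversely, that each Chinese relation is derivable from $\Gamma_2(n)\cup\Delta_2(n)$. The paper packages these two checks into a single critical-branching diagram per rule (one branch uses a $\Gamma_2$ rule, the other a Chinese relation, and the dotted completion is the new $\Delta_2$ rule), which simultaneously witnesses both directions of derivability; you instead verify the two congruence inclusions separately. The content is the same, and your attention to the boundary cases (routing $\epsilon_{x,y}$ and $\eta_{x,y}$ through columns rather than squares, and handling the $x=y$ instances of $\gamma_{zy,x}$ and $\gamma_{z,yx}$ via $\eta_{x,z}$) matches what the paper's diagrams encode implicitly.
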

\begin{proof}
We make explicit a Tietze equivalence between the rewriting systems~$\Chin^{c}_2(n)$ and~$\PreColo_2(n)$.
For~$1\leq x<y\leq n$, consider the following critical branching
\[
\small
\xymatrix @C=1em @R=0.5em {
& {c_{y}\Cdot c_{x}\Cdot c_{y}}
  \ar[r] ^-{\gamma_{\widehat{y,x},y}}
& {c_{yx}\Cdot c_{y}}
\\
{c_{y}\Cdot c_{y}\Cdot c_{x}}
	\ar@/^/ [ur] ^-{\epsilon_{x,y}}
	\ar@/_/ [dr] _-{\gamma_{y,\widehat{y,x}}}
\\
& {c_{y}\Cdot c_{yx}}
}
\]
of the rewriting system~$\Chin^{c}_2(n)$. We consider the Tietze transformation that substitutes the\linebreak rule~$\gamma_{y,yx}: c_y\Cdot c_{yx}\fl c_{yx}\Cdot c_y$ for the rule~$\epsilon_{x,y}$, for every~$1\leq x<y\leq n$.
Similarly, we substitute the rules~$\gamma_{yx,x}$,~$\gamma_{yy,x}$,~$\gamma_{y,xx}$,~$\gamma_{zy,x}$,~$\gamma_{zx,y}$ and~$\gamma_{z,yx}$ respectively for the rules~$\eta_{x,y}$,~$\epsilon_{x,y}$,~$\eta_{x,y}$,~$\epsilon_{x,y,z}$,~$\eta_{x, y, z}$ and~$\epsilon_{x,y,z}$ using the following critical branchings of the rewriting system~$\Chin^{c}_2(n)$:
\[
\small
\xymatrix @C=1em @R=0.5em {
& {c_{x}\Cdot c_{y}\Cdot c_{x}}
  \ar[r] ^-{\gamma_{x,\widehat{y,x}}}
& {c_{x}\Cdot c_{yx}}
\\
{c_{y}\Cdot c_{x}\Cdot c_{x}}
	\ar@/^/ [ur] ^-{\eta_{x,y}}
	\ar@/_/ [dr] _-{\gamma_{\widehat{y,x},x}}
\\
& {c_{yx}\Cdot c_{x}}
\ar@{.>}[uur] _-{\gamma_{yx,x}}
}
\;
\xymatrix @C=1em @R=0.5em {
& {c_{y}\Cdot c_{x}\Cdot c_{y}}
  \ar[r] ^-{\gamma_{\widehat{y,x},y}}
& {c_{yx}\Cdot c_{y}}
\\
{c_{y}\Cdot c_{y}\Cdot c_{x}}
	\ar@/^/ [ur] ^-{\epsilon_{x, y}}
	\ar@/_/ [dr] _-{\gamma_{\widehat{y,y},x}}
\\
& {c_{yy}\Cdot c_{x}}
\ar@{.>}[uur] _-{\gamma_{yy,x}}
}
\;
\xymatrix @C=1em @R=0.5em {
& {c_{x}\Cdot c_{y}\Cdot c_{x}}
  \ar[r] ^-{\gamma_{x,\widehat{y,x}}}
& {c_{x}\Cdot c_{yx}}
\\
{c_{y}\Cdot c_{x}\Cdot c_{x}}
	\ar@/^/ [ur] ^-{\eta_{x,y}}
	\ar@/_/ [dr] _-{\gamma_{y,\widehat{x,x}}}
\\
& {c_{y}\Cdot c_{xx}}
\ar@{.>}[uur] _-{\gamma_{y,xx}}
}
\]
\[
\small
\xymatrix @C=1em @R=0.5em {
& {c_{y}\Cdot c_{z}\Cdot c_{x}}
  \ar[r] ^-{\gamma_{y,\widehat{z,x}}}
& {c_{y}\Cdot c_{zx}}
\\
{c_{z}\Cdot c_{y}\Cdot c_{x}}
	\ar@/^/ [ur] ^-{\epsilon_{x,y,z}}
	\ar@/_/ [dr] _-{\gamma_{\widehat{z,y},x}}
\\
& {c_{zy}\Cdot c_{x}}
\ar@{.>}[uur] _-{\gamma_{zy,x}}
}
\;
\xymatrix @C=1em @R=0.5em {
& {c_{y}\Cdot c_{z}\Cdot c_{x}}
  \ar[r] ^-{\gamma_{y,\widehat{z,x}}}
& {c_{y}\Cdot c_{zx}}
\\
{c_{z}\Cdot c_{x}\Cdot c_{y}}
	\ar@/^/ [ur] ^-{\eta_{x,y,z}}
	\ar@/_/ [dr] _-{\gamma_{\widehat{z,x},y}}
\\
& {c_{zx}\Cdot c_{y}}
\ar@{.>}[uur] _-{\gamma_{zx,y}}
}
\;
\xymatrix @C=1em @R=0.5em {
& {c_{y}\Cdot c_{z}\Cdot c_{x}}
  \ar[r] ^-{\gamma_{y,\widehat{z,x}}}
& {c_{y}\Cdot c_{zx}}
\\
{c_{z}\Cdot c_{y}\Cdot c_{x}}
	\ar@/^/ [ur] ^-{\epsilon_{x,y,z}}
	\ar@/_/ [dr] _-{\gamma_{z,\widehat{y,x}}}
\\
& {c_{z}\Cdot c_{yx}}
\ar@{.>}[uur] _-{\gamma_{z,yx}}
}
\]
The set of rules $\gamma_{-,-}$ obtained in this way is equal to $\Delta_2(n)$. This proves that the rewriting systems~$\Chin^{c}_2(n)$ and~$\PreColo_2(n)$ are Tietze equivalent.
\end{proof}

\subsection{Completion of the precolumn presentation}
\label{SSS:CompletionPrecolumnpresentation}
The rewriting system~$\PreColo_2(n)$ is not confluent, it has the following non-confluent critical branchings, that can be completed by Knuth-Bendix completion,~\cite{KnuthBendix70}, by the dotted arrows as follows:
\begin{center}
\begin{tabular}{@{} ccc @{}}
\begin{minipage}[t]{9cm}
{\bf i)} for every $1\leq x\leq y< z<t\leq n$ :
\[
\small
\xymatrix @C=1em @R=0.5em {
& {c_{z}\Cdot c_{ty}\Cdot c_{x}}
  \ar[r] ^-{\gamma_{z,\widehat{ty,x}}}
& {c_{z}\Cdot c_{y}\Cdot c_{tx}}
\ar[r] ^-{\gamma_{\widehat{z,y},tx}}
& {c_{zy}\Cdot c_{tx}}
\\
{c_{ty}\Cdot c_{z}\Cdot c_{x}}
	\ar@/^/ [ur] ^-{\gamma_{\widehat{ty,z},x}}
	\ar@/_/ [dr] _-{\gamma_{ty,\widehat{z,x}}}
\\
& {c_{ty}\Cdot c_{zx}}
\ar@{.>}[uurr] _-{\gamma_{ty,zx}}
}
\]
\end{minipage}
& 
\begin{minipage}[t]{7cm}
{\bf ii)} for every $1\leq x< y < z\leq n$ :
\[
\small 
\xymatrix @C=1.5em @R=0.5em {
& {c_{zx}\Cdot c_{z}\Cdot c_{y}}
  \ar[r] ^-{\gamma_{zx,\widehat{z,y}}}
& {c_{zx}\Cdot c_{zy}}
\\
{c_{z}\Cdot c_{zx}\Cdot c_{y}}
	\ar@/^/ [ur] ^-{\gamma_{\widehat{z,zx},y}}
	\ar@/_/ [dr] _-{\gamma_{z,\widehat{zx,y}}}
\\
& {c_{z}\Cdot c_{y}\Cdot c_{zx}}
\ar[r] _-{\gamma_{\widehat{z,y},zx}}
& {c_{zy}\Cdot c_{zx}}
\ar@{.>}[uu] _-{\gamma_{zy,zx}}
}
\]
\end{minipage}
\end{tabular}
\end{center}
\begin{center}
\begin{tabular}{@{} ccc @{}}
\begin{minipage}[t]{9cm}
{\bf iii)} for every $1\leq x<y\leq z<t\leq n$ : 
\[
\small
\xymatrix @C=1em @R=0.5em {
& {c_{z}\Cdot c_{ty}\Cdot c_{x}}
  \ar[r] ^-{\gamma_{z,\widehat{ty,x}}}
& {c_{z}\Cdot c_{y}\Cdot c_{tx}}
\ar[r] ^-{\gamma_{\widehat{z,y},tx}}
& {c_{zy}\Cdot c_{tx}}
\\
{c_{tz}\Cdot c_{y}\Cdot c_{x}}
	\ar@/^/ [ur] ^-{\gamma_{\widehat{tz,y},x}}
	\ar@/_/ [dr] _-{\gamma_{tz,\widehat{y,x}}}
\\
& {c_{tz}\Cdot c_{yx}}
\ar@{.>}[uurr] _-{\gamma_{tz,yx}}
}
\]
\end{minipage}
& 
\begin{minipage}[t]{7cm}
\quad 
\[
\small
\xymatrix @C=1em @R=0.5em {
& {c_{z}\Cdot c_{tx}\Cdot c_{y}}
  \ar[r] ^-{\gamma_{z,\widehat{tx,y}}}
& {c_{z}\Cdot c_{y}\Cdot c_{tx}}
\ar[r] ^-{\gamma_{\widehat{z,y},tx}}
& {c_{zy}\Cdot c_{tx}}
\\
{c_{tx}\Cdot c_{z}\Cdot c_{y}}
	\ar@/^/ [ur] ^-{\gamma_{\widehat{tx,z},y}}
	\ar@/_/ [dr] _-{\gamma_{tx,\widehat{z,y}}}
\\
& {c_{tx}\Cdot c_{zy}}
\ar@{.>}[uurr] _-{\gamma_{tx,zy}}
}
\]
\end{minipage}
\end{tabular}
\end{center}
\begin{center}
\begin{tabular}{@{} ccc @{}}
\begin{minipage}[t]{9cm}
{\bf iv)} for every $1\leq x<y\leq z\leq n$ :
\[
\small
\xymatrix @C=2em @R=0.5em {
& {c_{zz}\Cdot c_{yx}}
\ar@{.>}[ddrr] ^-{\gamma_{zz,yx}}
\\
{c_{z}\Cdot c_{z}\Cdot c_{yx}}
	\ar@/^/ [ur] ^-{\gamma_{\widehat{z,z},yx}}
	\ar@/_/ [dr] _-{\gamma_{z,\widehat{z,yx}}}
\\
& {c_{z}\Cdot c_{y}\Cdot c_{zx}}
  \ar[r] _-{\gamma_{\widehat{z,y},zx}}
&{c_{zy}\Cdot c_{zx}}
 \ar[r] _-{\gamma_{zy,zx}}
  &{c_{zx}\Cdot c_{zy}}
}
\]
\end{minipage}
& 
\begin{minipage}[t]{7cm}
{\bf v)} for every $1 <x<y< n$ :
\[
\small
\xymatrix @C=2em @R=0.5em {
& {c_{yy}\Cdot c_{xx}}
\ar@{.>}[ddr] ^-{\gamma_{yy,xx}}
\\
{c_{y}\Cdot c_{y}\Cdot c_{xx}}
	\ar@/^/ [ur] ^-{\gamma_{\widehat{y,y},xx}}
	\ar@/_/ [dr] _-{\gamma_{y,\widehat{y,xx}}}
\\
& {c_{y}\Cdot c_{x}\Cdot c_{yx}}
  \ar[r] _-{\gamma_{\widehat{y,x},yx}}
&{c_{yx}\Cdot c_{yx}}
}
\]
\end{minipage}
\end{tabular}
\end{center}
\begin{center}
\begin{tabular}{@{} ccc @{}}
\begin{minipage}[t]{9cm}
{\bf vi)} for every $1\leq x\leq y< z\leq n$ :
\[
\small
\xymatrix @C=2em @R=0.5em {
& {c_{y}\Cdot c_{zx}\Cdot c_{x}}
\ar[r] ^-{\gamma_{y,\widehat{zx,x}}}
&{c_{y}\Cdot c_{x}\Cdot c_{zx}}
\ar[r] ^-{\gamma_{\widehat{y,x},zx}}
&{c_{yx}\Cdot c_{zx}}
\\
{c_{zy}\Cdot c_{x}\Cdot c_{x}}
	\ar@/^/ [ur] ^-{\gamma_{\widehat{zy,x},x}}
	\ar@/_/ [dr] _-{\gamma_{zy,\widehat{x,x}}}
\\ 
&{c_{zy}\Cdot c_{xx}}
  \ar@{.>}[uurr] _-{\gamma_{zy,xx}}
}
\]
\end{minipage}
& 
\begin{minipage}[t]{7cm}
{\bf vii)} for every $1<y<n$ :
\[
\small
\xymatrix @C=1em @R=0.5em {
& {c_{yy}\Cdot c_{y}}
\ar@{.>}[dd] ^-{\gamma_{yy,y}}
\\
{c_{y}\Cdot c_{y}\Cdot c_{y}}
	\ar@/^/ [ur] ^-{\gamma_{\widehat{y,y},y}}
	\ar@/_/ [dr] _-{\gamma_{y,\widehat{y,y}}}
\\
& {c_{y}\Cdot c_{yy}}
}
\]
\end{minipage}
\end{tabular}
\end{center}

The rules of~$\PreColo_2(n)$ together with the family of the dotted rules defined by {\bf i)}-{\bf vii)} form the set
\[
\big\{\; \gamma_{u,v} : c_u\Cdot c_v \fl R_{Q_n}(c_u\star_{\Ich} c_v) \;|\; c_u,c_v\in Q_{n}\;\big\}.
\]
That is, the set of rules of~$\Srs(Q_n, \C_n)$.
Finally, by this construction, we prove  that~$R_{Q_n}(c_u\star_{\Ich} c_v)$ is at most of length~$2$ in~$Q_n^{\ast}$, showing the semi-quadraticity of the presentation.

\begin{proposition}
\label{Proposition:ConvergenceColumnPresentation}
The rewriting system~$\Srs(Q_n,\C_n)$  is convergent.
\end{proposition}

\begin{proof}
Prove that $\Srs(Q_n,\C_n)$ is terminating. Consider the total order $\ordrecolChinese$ defined on $Q_n$ by 
\[
c_x\ordrecolChinese c_y \quad\text{ if }  x\leq y,
\qquad 
c_x \ordrecolChinese c_{zy} \quad \text{ if } x\leq y\leq z,
\]
\[
c_{yx}\ordrecolChinese c_z  \quad\text{ if } x<y\leq z,
\qquad
c_{yx} \ordrecolChinese  c_{tz} \quad\text{ if }  yx\leq_{\text{lex}}tz,
\]
where~$\leq_{\text{lex}}$ denotes the lexicographic order on $[n]^\ast$ induced by the natural order on~$[n]$.
Consider the map~$f : Q_n^\ast \fl  (\mathbb{N}, \leq)$  sending a word in~$ Q_n^\ast$ to its number of columns.
Define the length-lexicographic order~$\prec$ on~$Q_n^\ast$ with respect to~$\ordrecolChinese$ by setting, for all $u$ and $v$ in~$Q_n^\ast$:
\[
u\prec v \;\;\text{ if and only if}\;\;  \big(f(u)<f(v)\big) \; \text{ or } \; \big(f(u)=f(v) \text{ and } u  \ordrecolChinese^{\text{lex}} v \big),
\]
where~$\ordrecolChinese^{\text{lex}}$ denotes the lexicographic order on~$Q_n^\ast$ induced by~$\ordrecolChinese$. Any reduction with respect to~$\Srs(Q_n,\C_n)$  decrease a word in~$Q_n^\ast$ either with respect to~$f$ or with respect to~$\ordrecolChinese^{\text{lex}}$, showing that the rewriting system~$\Srs(Q_n, \C_n)$ is terminating.

Prove that $\Srs(Q_n,\C_n)$ is confluent. Any critical pair of~$\Srs(Q_n,\C_n)$ has the form~$(\gamma_{c_u,c_v}\Cdot c_t, c_u\Cdot \gamma_{c_v,c_t})$, for~$c_u, c_v, c_t$ in~$Q_n$. Note that, by associativity of~$\star_{\Ich}$, the rewriting path~$\sigma^{\vdash}_{R_{Q_n}(t)\cdot c_u}$ (resp.~$\sigma^{\vdash}_{c_u\cdot R_{Q_n}(t)}$ ) reduces~$R_{Q_n}(t)\cdot c_u$ (resp.~$ c_u\cdot R_{Q_n}(t)$) to~$R_{Q_n}(t\star_{\Ich} c_u)$ (resp.~$R_{Q_n}(c_u\star_{\Ich} t)$), for all~$t$ in~$\ChinT_n$ and~$c_u$ in~$Q_n$. 
Hence,  every critical pair of~$\Srs(Q_n,\C_n)$ has the following reduction diagram:
\[
\xymatrix @C=3.5em @R=0.5em {
&  R_{Q_n}(c_u\star_{\Ich} c_v ) \Cdot c_t
	\ar [rr] ^-{\sigma^{\vdash}_{ R_{Q_n}(c_u\star_{\Ich} c_v )\cdot c_t}}
&& R_{Q_n}(( c_u\star_{\Ich} c_v)\star_{\Ich} c_t )
\\
c_u \Cdot c_v \Cdot  c_t
	\ar@/^2ex/ [ur]^-{\gamma_{\widehat{u,v},t}}
	\ar@/_2ex/[dr]_-{\gamma_{u,\widehat{v,t}}}
\\
& c_u \Cdot  R_{Q_n}(c_v\star_{\Ich} c_t)
\ar [rr] _-{\sigma^{\vdash}_{c_u \cdot  R_{Q_n}(c_v\star_{\Ich} c_t)}}
&& R_{Q_n}(c_u \star_{\Ich} (c_v\star_{\Ich} c_t) )
}
\]
which is confluent by the associativity of the product~$\star_{\Ich}$. This proves that the rewriting system~$\Srs(Q_n,\C_n)$ is locally confluent and thus confluent by termination hypothesis. 
\end{proof}

\section{Chinese syzygies by insertions}
\label{S:ChineseSyzygies}

In this section we extend the rewriting system $\Srs(Q_n,\C_n)$ into a finite coherent convergent presentation of the Chinese monoid $\C_n$ with an explicit description of the generating syzygies.
By semi-quadraticity of~$\Srs(Q_n, \C_n)$, every rewriting path with source $c_u\Cdot c_v\Cdot c_t$ is an alternated composition of reductions of the form~(\ref{E:ReductionsLeftRight}). Moreover, every rewriting rule $\gamma_{-,-}$ of $\Srs(Q_n, \C_n)$ can be written 
\begin{equation}
\label{Equation:NewAlpha}
\gamma_{yx_1,x_2x_3} : c_{yx_1}\Cdot c_{x_2x_3} \fl c_{x_{\sigma(1)}x_{\sigma(2)}}\Cdot c_{yx_{\sigma(3)}}
\end{equation}
where $y\in [n]$, $x_1,x_2,x_3\in [n] \cup \{0\}$, $\sigma$ is a permutation on $[n] \cup \{0\}$, and $c_{x0}$ denotes the column generator~$c_{x}$ for any~$1<x<n$. 

\subsection{Remark}
\label{Remark:cyx1}
Note that when~$c_{yx_1}$ is not a square generator, then~$x_{\sigma(1)}$ takes value $y$ only if rule~(\ref{Equation:NewAlpha}) is one of the \emph{commutation rules} of the form
\begin{equation}
\label{commutationRelations}
 c_y\Cdot c_{yx}\fl c_{yx}\Cdot c_{y}, \quad 
c_{zy}\Cdot c_{zx}\fl c_{zx}\Cdot c_{zy},\quad
c_{yy}\Cdot c_{y} \fl c_{y}\Cdot c_{yy},\quad
c_{yy}\Cdot c_{yx}\fl c_{yx}\Cdot c_{yy}
\end{equation}
for~$x<y<z$. When~$c_{yx_1}$ is a square generator, with~$y>x_2$, then~$x_{\sigma(1)}$ takes value $y$ only if rule~(\ref{Equation:NewAlpha}) is one of the form
\begin{equation}
\label{SquareRelations}
 c_{yy}\Cdot c_{x}\fl c_{yx}\Cdot c_{y},\quad
 c_{yy}\Cdot c_{xx}\fl c_{yx}\Cdot c_{yx},\quad
  c_{zz}\Cdot c_{yx} \fl c_{zx}\Cdot c_{zy}.
\end{equation}

We obtain the following bounds for the rewriting paths with source a critical branching of~$\Srs(Q_n, \C_n)$.

\begin{proposition}
\label{P:InequalitiesLengthReductions}
For all~$c_u,c_v,c_t$ in~$Q_n$ such that~$c_u\Cdot c_v$ and~$c_v\Cdot c_t$ are not Chinese normal forms, the two following inequalities hold:
\begin{equation}
\label{E:InequalityLength}
\len_l(c_u\Cdot c_v\Cdot c_t)\leq 5,
\qquad
\text{and}
\qquad
\len_r(c_u\Cdot c_v\Cdot c_t)\leq 5.
\end{equation}
\end{proposition}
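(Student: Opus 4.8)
The plan is to analyze the two canonical rewriting paths (leftmost and rightmost) starting from a critical branching $(c_u \Cdot c_v \Cdot c_t)$ and bound their length by a careful bookkeeping of how many reduction steps can occur. Since $\Srs(Q_n, \SdsC_n)$ is semi-quadratic and reduced, on a word of length $3$ there are exactly two maximal rewriting paths, and by \ref{L:ReductionStrategyTerminationR(Q,S)} each of them terminates at the common normal form $R_{Q_n}(c_u \star_{C_r} c_v \star_{C_r} c_t)$. The key combinatorial fact, which I would extract from the explicit shape of the rules in~(\ref{Equation:NewAlpha}), is that each rule has source of length $2$ and target of length exactly $2$ except when the target involves a column generator of length $1$, and that applying a rule to a word $c_a \Cdot c_b \Cdot c_c$ of length $3$ leaves a word of length $3$. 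Thus the leftmost path is a sequence $w_0 \to w_1 \to \cdots \to w_p$ of words of length $3$, alternating $\gamma_{\widehat{\cdot,\cdot},\cdot}$ and $\gamma_{\cdot,\widehat{\cdot,\cdot}}$ reductions as noted in Subsection~\ref{SS:CoherentPresentationChineseMonoid}, and I must show $p \leq 5$.

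First I would set up a monovariant that strictly decreases along the path. The natural candidate is the well-founded order $\ordrecolChinese$ lifted to a lexicographic order $\prec_{f,g}$ on $Q_n^\ast$ as in~\ref{SSS:wellFoundedOrder}, where $f$ counts something like the total "column weight" and $g$ tracks the first letter; but to get the sharp bound $5$ I expect one needs a finer, explicitly bounded quantity. A cleaner route is to observe that the first coordinate of the word can only increase (become a "larger" generator in $\ordrecolChinese$) a bounded number of times: by \ref{Remark:cyx1}, the index $y$ can migrate into the leftmost generator only via a commutation rule~(\ref{commutationRelations}) or a square rule~(\ref{SquareRelations}), and once the leftmost position stabilizes to its normal-form value, only the suffix of length $2$ is being reduced, which takes at most one more step. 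So the plan is: (i) show the leftmost reduction reaches a word whose first letter is in normal form after at most, say, $3$ or $4$ steps, by a case analysis on whether $c_u$, $c_v$, $c_t$ are length-$1$, length-$2$, or square generators; (ii) once the first letter is fixed, the remaining word of length $2$ needs at most $1$ reduction; (iii) add up to conclude $\len_l \leq 5$. The bound for $\len_r$ follows either by a symmetric argument using $\Srs(Q_n,\Tds)^{op}$ and the Schützenberger-type involution discussed in the Remark after Theorem~\ref{Theorem:coherentpresentation}, or by a parallel direct case analysis.

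The cleanest implementation I would actually pursue is a brute enumeration organized by the "type vector" of $(c_u, c_v, c_t) \in Q_n^3$, each entry being one of the three kinds of generator (length $1$, length $2$, square), giving $27$ cases, many of which are immediately excluded because $c_u \Cdot c_v$ or $c_v \Cdot c_t$ must fail to be a Chinese normal form. For each surviving type, the shape of $R_{Q_n}(c_u \star_{C_r} c_v \star_{C_r} c_t)$ and the intermediate words are forced by~(\ref{Equation:NewAlpha}) and the completion rules {\bf i)}--{\bf vii)} of~\ref{SSS:CompletionPrecolumnpresentation}, so one just reads off the path length. The worst cases are those that trigger the "hexagonal" reconfiguration appearing in Theorem~\ref{T:CoherenceQCol3}, i.e. the diagrams in {\bf i)}, {\bf iii)}, {\bf iv)}, {\bf vi)}, where after $\gamma_{\widehat{u,v},t}$ one still needs two more reductions and after $\gamma_{u,\widehat{v,t}}$ one needs up to four: that is precisely where the bound $5$ is attained, and checking that no case exceeds it is the main obstacle. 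I expect the argument to reduce to verifying that in each of the seven completion families, the longer of the two resolving paths has length $\leq 4$, so that prepending the initial branching step gives $\leq 5$; the remaining compositions of these families (when the normal form is reached only after several successive applications) do not arise for a word of length $3$ precisely because each rule's target has length $\leq 2$, which caps the total.

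Finally, I would record that equality in~(\ref{E:InequalityLength}) is genuinely achieved — this both confirms the bound is sharp and feeds into the degagonal shape of the generating $3$-cells $\Xr_{u,v,t}$ in the next subsection, whose two sides are the leftmost and rightmost normalisation strategies $\sigma^{\top,Q_n}$ and $\sigma^{\perp,Q_n}$. The only subtlety to watch is the degenerate subcases where some $\gamma_{-,-}$ in~(\ref{E:ReductionsLeftRight}) is an identity (as flagged at the end of the statement of Theorem~\ref{T:CoherenceQCol3}); there the relevant path is strictly shorter, so the inequalities hold a fortiori.
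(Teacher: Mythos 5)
There is a genuine gap: your text is a plan for a finite verification rather than the verification itself, and the two quantitative claims on which the plan rests are asserted but not established. In step (i) you claim the leftmost generator reaches its normal-form value within $3$ or $4$ steps, and in step (ii) that a single further reduction then finishes; neither is obvious, and neither is what actually makes the bound work. The paper's argument is sharper and genuinely needs proof: it shows (Lemma~\ref{CommutationLemma}) that if $\rho_{l,3}(c_u\Cdot c_v\Cdot c_t)$ is not yet a normal form, then \emph{only} commutation rules~(\ref{commutationRelations}) can apply from that point on, and this is proved by a contradiction argument exploiting which length-two factors are already normal forms to rule out the square rules~(\ref{SquareRelations}); two commutations then necessarily terminate. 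Your weaker observation from~\ref{Remark:cyx1} (that $y$ can migrate leftwards only via a commutation or square rule) does not by itself exclude the square rules late in the path, which is exactly the dangerous case. Similarly, your fallback of enumerating $27$ type vectors undercounts the work: within a fixed type, which rule of~\ref{SSS:CompletionPrecolumnpresentation} applies depends on the relative order of the indices $x,y,z,t,\dots$, so each type splits into many subcases (the paper's Lemma~\ref{SquareLemma} already lists about ten configurations for the single case where $c_u$ is a square generator). Also note that a rule $\gamma_{y,x}:c_y\Cdot c_x\fl c_{yx}$ has target of length $1$, so your assertion that every reduction preserves the length-$3$ shape of the word is not literally true.

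For $\len_r$ the suggested shortcut via a Sch\"utzenberger-type involution is not available: no such involution is constructed for the Chinese string data structure in this setting, and the Remark you invoke only establishes one for $\Yrow{n}$. The paper instead argues directly that if the rightmost path has length $>4$, then the first rule applied must be one of the square rules~(\ref{SquareRelations}) (a commutation rule is excluded by another normal-form contradiction), forcing $c_u$ to be a square generator, after which Lemma~\ref{SquareLemma} closes the case by explicit enumeration. Your identification of the overall shape (alternating reductions~(\ref{E:ReductionsLeftRight}), the role of~\ref{Remark:cyx1}, the fact that length $5$ on one side forces length $\leq 4$ on the other) is correct, but the two exclusion arguments and the explicit square-generator enumeration are the actual content of the proof and are missing from your proposal.
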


The proof of this result is based on the two following lemmata~\ref{CommutationLemma} and \ref{SquareLemma}.
Let~$c_{u},c_{v},c_{t}$ be in~$Q_n$ such that~$c_u\Cdot c_v$ and~$c_v\Cdot c_t$ are not Chinese normal forms. The Chinese normal form of the word~$c_u\Cdot c_v\Cdot c_t$  can be obtained by applying one, two or three steps of reductions of the leftmost normalization  strategy of~$\Srs(Q_n, \C_n)$. In this case, we have~$\len_l(c_u\Cdot c_v\Cdot c_t)\leq 3$. Otherwise, the following lemma shows that \linebreak $\len_l(c_u\Cdot c_v\Cdot c_t)\leq 5$.

\begin{lemma}
\label{CommutationLemma}
Let~$c_{u},c_{v},c_{t}$ be in~$Q_n$ such that~$c_u\Cdot c_v$ and~$c_v\Cdot c_t$ are not Chinese normal forms.
Suppose that the word obtained after three steps of reductions of the leftmost normalization strategy of~$\Srs(Q_n, \C_n)$ with source~$c_u\cdot c_v\cdot c_t$ is not a Chinese normal form. Then, the Chinese normal form of this word is obtained by applying at most two steps of reductions, that consist only on the commutation rules~(\ref{commutationRelations}).
\end{lemma}

\begin{proof}
Let~$c_{yx_1}$, $c_{x_2x_3},c_{x_4x_5}$ be in~$Q_n$ such that~$c_{yx_1}\Cdot c_{x_2x_3}$ and~$c_{x_2x_3}\Cdot c_{x_4x_5}$ are not Chinese normal forms. By definition of~$\Srs(Q_n, \C_n)$, we have
\begin{equation}
\label{E:reductionCommutation}
\begin{array}{r}
c_{yx_1}\Cdot c_{x_2x_3}\Cdot c_{x_4x_5} 
\fl c_{x_{\sigma(1)}x_{\sigma(2)}}\Cdot c_{yx_{\sigma(3)}}\Cdot c_{x_4x_5} 
\fl c_{x_{\sigma(1)}x_{\sigma(2)}}\Cdot c_{x_{\sigma'(\sigma(3))}x_{\sigma'(4)}}\Cdot c_{yx_{\sigma'(5)}} \hspace{1.5cm}\\
\fl 
c_{z_1 z_2}\Cdot c_{x_{\sigma(1)}z_3}\Cdot c_{yx_{\sigma'(5)}}
\end{array}
\end{equation}
with~$z_1= x_{\sigma''(\sigma(2))}$,~$z_2= x_{\sigma''(\sigma'(\sigma(
3)))}$,~$z_3= x_{\sigma''(\sigma'(4))}$, and where~$\sigma$, $\sigma'$,~$\sigma''$ are permutations on~$[n]\cup \{0\}$, and~$c_{x_{\sigma(1)}x_{\sigma(2)}}\Cdot c_{yx_{\sigma(3)}}$, ~$c_{x_{\sigma'(\sigma(3))}x_{\sigma'(4)}}\Cdot c_{yx_{\sigma'(5)}}$,~$c_{z_1 z_2}\Cdot c_{x_{\sigma(1)}z_3}$ are Chinese normal forms.

Suppose that~$c_{x_{\sigma(1)}z_3}\Cdot  c_{yx_{\sigma'(5)}}$ is not a Chinese normal form. Following Remark~\ref{Remark:cyx1}, its only possible reductions are of form~(\ref{commutationRelations}) or~(\ref{SquareRelations}). Let us prove that the rules ~(\ref{SquareRelations}) cannot be applied. Suppose the contrary. Then~$x_{\sigma(1)} = z_3 > y$. Since~$c_{z_1 z_2}\Cdot c_{x_{\sigma(1)}z_3}$ is a Chinese normal form, we obtain that~$z_1 = z_3$ and~\mbox{$c_{x_{\sigma(1)}x_{\sigma(2)}}\Cdot c_{x_{\sigma'(\sigma(3))}x_{\sigma'(4)}}\Cdot c_{yx_{\sigma'(5)}} = c_{z_3z_3}\Cdot c_{z_3 z_2}\Cdot c_{yx_{\sigma'(5)}}$}. 
Since~$z_3>y$, this proves that~$c_{z_3 z_2}\Cdot c_{yx_{\sigma'(5)}} = c_{x_{\sigma'(\sigma(3))}x_{\sigma'(4)}}\Cdot c_{yx_{\sigma'(5)}}$ is not a Chinese normal form, which yields a contradiction.

Then we can only apply a commutation rule on~$c_{x_{\sigma(1)}z_3} \Cdot c_{yx_{\sigma'(5)}}$, with~$x_{\sigma(1)}= y$, and we rewrite the word~$c_{z_1 z_2}\Cdot c_{x_{\sigma(1)}z_3}\Cdot c_{yx_{\sigma'(5)}}$ into~$c_{z_1 z_2}\Cdot c_{yx_{\sigma'(5)}} \Cdot c_{x_{\sigma(1)}z_3}$. Suppose that~$c_{z_1 z_2}\Cdot c_{yx_{\sigma'(5)}}$ is not a Chinese normal form, then we can apply on it a rule of type~(\ref{commutationRelations}) or~(\ref{SquareRelations}). 
As in the previous step, let us prove that the rules ~(\ref{SquareRelations}) cannot be applied. Suppose the contrary.  Then~$z_1=z_2>y$. Since~$c_{z_1 z_2}\Cdot c_{x_{\sigma(1)}z_3}$ is a Chinese normal form, we obtain that~$z_1=z_2= x_{\sigma(1)}= y$, which yields a contradiction.
Then we can only apply a commutation rule on~$c_{z_1 z_2}\Cdot c_{yx_{\sigma'(5)}}$.

We have thus proved that the Chinese normal form of the word~$c_{yx_1}\Cdot c_{x_2x_3}\Cdot c_{x_4x_5}$ is obtained by applying at most two steps of reductions that consist only on the commutation rules.
\end{proof}

\begin{lemma}
\label{SquareLemma}
For all~$c_u,c_v,c_t$ in~$Q_n$ such that~$c_u$ is a square generator and  the words~$c_{u}\Cdot c_v$ and~$c_v\Cdot c_t$ are not Chinese normal forms, the inequality~$\len_r(c_{u}\Cdot c_v\Cdot c_t)\leq 5$ holds.
\end{lemma}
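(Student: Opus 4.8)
\emph{Proof plan.} Write~$c_u = c_{xx}$ with $1 < x < n$, so $c_u$ is a square generator. We follow the rightmost reduction strategy, so the first reduction on~$c_{xx}\Cdot c_v\Cdot c_t$ is~$\gamma_{u,\widehat{v,t}} = c_{xx}\Cdot\gamma_{v,t}$, producing~$c_{xx}\Cdot w$ where~$w = R_{Q_n}(c_v\star_{\Ich}c_t)$ is a Chinese normal form of length at most~$2$ in~$Q_n^\ast$ (semi-quadraticity, established in~\ref{SSS:CompletionPrecolumnpresentation}). If~$w$ is a single generator~$c_a$, then~$c_{xx}\Cdot c_a$ reaches its normal form in at most two further reductions, each among the rules~(\ref{commutationRelations}) and~(\ref{SquareRelations}) having a square generator as left factor, so~$\len_r(c_{xx}\Cdot c_v\Cdot c_t)\leq 3$ and there is nothing more to prove. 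Assume then~$w = c_a\Cdot c_b$ with~$c_a\Cdot c_b$ a Chinese normal form.

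In~$c_{xx}\Cdot c_a\Cdot c_b$ the rightmost redex, if any, is the leftmost pair~$c_{xx}\Cdot c_a$; if it is not a redex the word is already a normal form. Otherwise the second reduction is~$\gamma_{\widehat{xx,a},b}$. By~\ref{Remark:cyx1}, since~$c_{xx}$ is a square generator, this reduction is either a commutation from~(\ref{commutationRelations}), giving~$c_a\Cdot c_{xx}\Cdot c_b$, or one of the three rules of~(\ref{SquareRelations}), giving~$c_p\Cdot c_q\Cdot c_b$ where~$c_p\Cdot c_q = R_{Q_n}(c_{xx}\star_{\Ich}c_a)$ is a Chinese normal form, $c_p$ still carries the row index~$x$ of the square generator, and (inspecting the right-hand sides of~(\ref{SquareRelations})) $c_p$ is itself not a square generator. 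In the commutation case the word~$c_a\Cdot c_{xx}\Cdot c_b$ is handled by the same analysis with the square generator shifted one position to the right, and since~$c_{xx}$ can only commute past generators of row index~$x$, the normal-form hypothesis on~$c_a\Cdot c_b$ bounds this by one more step. In the square case we must bound the rightmost reduction of~$c_p\Cdot c_q\Cdot c_b$; since~$c_p\Cdot c_q$ is already a normal form, this reduction starts on the pair~$c_q\Cdot c_b$.

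Here we argue as in the proof of Lemma~\ref{CommutationLemma}: comparing the row indices of~$c_p$, $c_q$ and~$c_b$ and using that both~$c_p\Cdot c_q$ and~$c_a\Cdot c_b$ are Chinese normal forms, one checks that no rule of~(\ref{SquareRelations}) can apply to~$c_q\Cdot c_b$, so only a commutation from~(\ref{commutationRelations}) is possible; a commutation moves a column to the right without producing a new square generator, and the same comparison of indices shows that at most two further commutations are triggered before the word is a normal form. Altogether at most three reductions follow the first two, giving~$\len_r(c_{xx}\Cdot c_v\Cdot c_t)\leq 5$. The main obstacle is precisely this last paragraph: a rule-by-rule inspection of~(\ref{commutationRelations}) and~(\ref{SquareRelations}), combined with the normality of~$c_a\Cdot c_b$, is needed to rule out a new square-type reduction on~$c_q\Cdot c_b$ and thereby to show that the ensuing cascade of commutations terminates within the claimed bound.
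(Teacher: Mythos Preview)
Your approach is genuinely different from the paper's. The paper gives no structural argument at all: it simply lists the ten possible shapes of~$c_u\Cdot c_v\Cdot c_t$ with~$c_u$ a square generator (parametrised by inequalities among four or five indices) and asserts that in each of them the rightmost strategy reaches the normal form in at most five steps. In other words, the paper's proof is a finite case check, with the verification of each case left to the reader.

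Your plan tries to replace this enumeration by tracking how the square generator~$c_{xx}$ interacts with its neighbours along the rightmost strategy, using~\ref{Remark:cyx1} to classify each step as a commutation~(\ref{commutationRelations}) or a square rule~(\ref{SquareRelations}). The first two steps of your analysis are fine. The difficulty is exactly where you flag it: in the ``square case'' you assert that no rule of~(\ref{SquareRelations}) can apply to~$c_q\Cdot c_b$ and that at most two further commutations follow. Neither claim is established, and the appeal to the proof of Lemma~\ref{CommutationLemma} does not transfer directly, because that lemma concerns the \emph{leftmost} strategy after three steps, whereas here you are on the rightmost strategy after two. To close this gap you would need to know the explicit form of~$c_p\Cdot c_q$ for each of the three rules in~(\ref{SquareRelations}) and then compare with the possible shapes of~$c_b$ allowed by the normality of~$c_a\Cdot c_b$; but once you unfold that, you are essentially doing the paper's case enumeration in disguise.

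So the plan is sound in outline but does not actually save work over the paper's method: the ``rule-by-rule inspection'' you postpone is the whole content of the lemma. If you want a self-contained proof, the cleanest route is the paper's: list the finitely many triples~$(c_u,c_v,c_t)$ up to the index pattern and verify the bound directly for each.
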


\begin{proof}
By hypotheses, the word~$c_{u}\Cdot c_v\Cdot c_t$ has the following forms: $c_{rr}\Cdot c_{tz}\Cdot c_{yx}$ and $c_{rr}\Cdot c_{tx}\Cdot c_{zy}$, for all~$x<y<z<t\leq r$, $c_{tt}\Cdot c_{zy}\Cdot c_{zx}$, $c_{tt}\Cdot c_{zx}\Cdot c_{y}$, $c_{tt}\Cdot c_{zy}\Cdot c_{x}$ and $c_{tt}\Cdot c_{zy}\Cdot c_{yx}$, for all~$x<y<z\leq t$,  $c_{zz}\Cdot c_{yx}\Cdot c_{x}$ and $c_{zz}\Cdot c_{y}\Cdot c_{x}$, for all~$x<y\leq z$, $c_{rr}\Cdot c_{ty}\Cdot c_{zx}$, for all~$x\leq y<z<t\leq r$, and $c_{tt}\Cdot c_{z}\Cdot c_{yx}$ for all~$x<y\leq z \leq t$.
For all these forms, one can check that~$\len_r(c_{u}\Cdot c_v\Cdot c_t)\leq 5$.
\end{proof}

\subsection{Proof of Proposition~\ref{P:InequalitiesLengthReductions}}
Let~$c_{yx_1}$, $c_{x_2x_3}$,~$c_{x_4x_5}$ be in~$Q_n$ such that~$c_{yx_1}\Cdot c_{x_2x_3}$ and~$c_{x_2x_3}\Cdot c_{x_4x_5}$ are not Chinese  normal forms. Let us prove that~$\len_l(c_{yx_1}\Cdot c_{x_2x_3}\Cdot c_{x_4x_5} )\leq 5$. 
Suppose that the word obtained after two steps of reductions of  the leftmost normalization strategy of~$\Srs(Q_n, \C_n)$ with source~$c_{yx_1}\Cdot c_{x_2x_3}\Cdot c_{x_4x_5}$ is not a Chinese normal form.
Consider a reduction as in~(\ref{E:reductionCommutation}), and suppose that~$c_{x_{\sigma(1)}z_3}\Cdot c_{yx_{\sigma'(5)}}$ is not a Chinese normal form.
By Lemma~\ref{CommutationLemma} its only possible reductions are commutation rules, hence there is a reduction~$c_{z_1 z_2}\Cdot c_{x_{\sigma(1)}z_3}\Cdot c_{yx_{\sigma'(5)}} \fl c_{z_1 z_2}\Cdot c_{yx_{\sigma'(5)}}\Cdot c_{x_{\sigma(1)}z_3}$.
Suppose that~$c_{z_1 z_2}\Cdot c_{yx_{\sigma'(5)}}$ is not a Chinese normal form, then by the same argument there is a reduction~$c_{z_1 z_2}\Cdot c_{yx_{\sigma'(5)}} \Cdot c_{x_{\sigma(1)}z_3}\fl c_{yx_{\sigma'(5)}}\Cdot c_{z_1 z_2}\Cdot c_{x_{\sigma(1)}z_3}$, 
where~$c_{yx_{\sigma'(5)}}\Cdot c_{x_{\sigma(1)}z_3}$ and~\mbox{$c_{yx_{\sigma'(5)}}\Cdot c_{z_1 z_2}$} are Chinese normal forms. Since~$c_{z_1 z_2}c_{x_{\sigma(1)}z_3}$ is a Chinese normal form, we obtain that~$c_{yx_{\sigma'(5)}}c_{x_{\sigma(1)}z_3}$ is a Chinese normal form.
This proves the first inequality in~\eqref{E:InequalityLength}.

Let us prove that~$\len_r(c_{yx_1}\Cdot c_{x_2x_3}\Cdot c_{x_4x_5} )\leq 5$. 
Suppose that the word obtained after three steps of reductions of the rightmost normalization strategy of~$\Srs(Q_n, \C_n)$ with source~$c_{yx_1}\Cdot c_{x_2x_3}\Cdot c_{x_4x_5}$ is not a Chinese normal form. 
By definition of~$\Srs(Q_n, \C_n)$, we have the following reductions
\begin{equation}
  \label{Eq:ChineseRightRed}
  \begin{aligned}
  & & c_{yx_1}\Cdot c_{x_2x_3}\Cdot c_{x_4x_5}
\fl
 c_{yx_1}\Cdot c_{x_{\sigma(3)}x_{\sigma(4)}}\Cdot c_{x_2x_{\sigma(5)}} 
\fl
c_{x_{\sigma'(1)}y_1}\Cdot c_{yy_2}\Cdot c_{x_2x_{\sigma(5)}} \\    
   &&c_{x_{\sigma'(1)}y_1}\Cdot c_{yy_2}\Cdot c_{x_2x_{\sigma(5)}} 
\fl 
c_{x_{\sigma'(1)}y_1}\Cdot  c_{x_{\sigma"(2)}z_1}\Cdot c_{yz_2} 
\fl 
c_{t_1t_2}\Cdot c_{x_{\sigma'(1)}t_3}\Cdot c_{yz_2} 
  \end{aligned}
\end{equation}
with~$y_1 =x_{\sigma'(\sigma(3))}$,~$y_2= x_{\sigma'(\sigma(4))}$,~$z_1=x_{\sigma''(\sigma'(\sigma(4)))}$,~$z_2= x_{\sigma''(\sigma(5))}$,~$t_1= x_{\sigma_1(\sigma'(1))}$,~$t_2= x_{\sigma_1(\sigma'(\sigma(3)))}$, $t_3= x_{\sigma_1(\sigma''(\sigma'(\sigma(1))))}$, and where~$\sigma,\sigma',\sigma'',\sigma_1$ are permutations on~$[n]\cup \{0\}$, and~\mbox{$c_{x_{\sigma(3)}x_{\sigma(4)}}\Cdot c_{x_2x_{\sigma(5)}}$,} $c_{x_{\sigma'(1)}y_1}\Cdot c_{yy_2}$,~$c_{x_{\sigma"(2)}z_1}\Cdot c_{yz_2}$ and~$c_{t_1t_2}\Cdot c_{x_{\sigma'(1)}t_3}$ are Chinese normal forms.

Suppose that the word obtained after applying four steps of reductions of the rightmost normalization strategy with source~$c_{yx_1}\Cdot c_{x_2x_3}\Cdot c_{x_4x_5}$ is not a Chinese normal form.
Then~$x_{\sigma'(1)} = y$ and the second reduction of~(\ref{Eq:ChineseRightRed})  is~$c_{yx_1}\Cdot c_{x_{\sigma(3)}x_{\sigma(4)}}\Cdot c_{x_2x_{\sigma(5)}} \fl c_{yy_1}\Cdot c_{yy_2}\Cdot c_{x_2x_{\sigma(5)}}$. Following Remark~\ref{Remark:cyx1}, the rule~$\gamma_{yx_1, x_{\sigma(3)}x_{\sigma(4)}}$ is of form~(\ref{commutationRelations}) or~(\ref{SquareRelations}). Let us prove that it cannot be of form~(\ref{commutationRelations}). Suppose the contrary. Since~$c_{x_{\sigma(3)}x_{\sigma(4)}}\Cdot c_{x_2x_{\sigma(5)}}$ is a Chinese  normal form, we obtain~$x_{\sigma(3)}=y \geq x_2$. Moreover, since~$c_{yx_1}\Cdot c_{x_2x_3}$ is not a Chinese normal form, the inequality~$y\leq x_2$ holds, hence~$y=x_2$. In this way, the first reduction of~(\ref{Eq:ChineseRightRed}) is~\mbox{$c_{yx_1}\Cdot c_{yx_3}\Cdot c_{yx_5} 
\fl c_{{y}x_{3}}\Cdot c_{yx_{1}}\Cdot c_{yx_5}$}, where~$c_{yx_3}c_{yx_5}$ is a Chinese normal form, and  its second reduction is~$c_{{y}x_{3}}\Cdot c_{yx_{1}}\Cdot c_{yx_5}\fl c_{{y}x_{3}}\Cdot c_{yx_5}\Cdot c_{yx_{1}}$. Since the word obtained after three steps of reductions of the rightmost normalization strategy of~$\Srs(Q_n, \C_n)$ with source~$c_{yx_1}\Cdot c_{yx_3}\Cdot c_{yx_5}$ is not a Chinese normal form,  the word~$c_{yx_{3}}\Cdot c_{yx_5}$ is not a Chinese normal form, which yields a contradiction. 

Thus, the rule~$\gamma_{yx_1, x_{\sigma(3)}x_{\sigma(4)}}$  is of form~(\ref{SquareRelations}) and~$c_{yx_1}$ is a square generator such that~$c_{yx_1}\Cdot c_{x_2x_3}$ and~$c_{x_2x_3}\Cdot c_{x_4x_5}$ are not Chinese normal forms. Hence by Lemma~\ref{SquareLemma} we obtain~$\len_r(c_{yx_1}\Cdot c_{x_2x_3}\Cdot c_{x_4x_5})\leq 5$. This proves the second inequality in~\eqref{E:InequalityLength}.

\begin{theorem}
\label{T:CoherenceQCol3}
The rewriting system $\Srs(Q_n, \C_n)$ extends into a finite coherent convergent presentation of the Chinese monoid $\C_n$ by adjunction of a generating syzygy
\[
\xymatrix @C=2.8em @R=0.65em {
&
c_{e}\Cdot  c_{e'}\Cdot c_t
   \ar@2[r] ^-{\gamma_{e,\widehat{e',t}}} 
 \ar@3 []!<95pt,-10pt>;[dd]!<95pt,10pt> ^{\;\mathcal{X}_{u,v,t}} 
&
c_{e}\Cdot c_{b}\Cdot c_{b'}
   \ar@2[r] ^-{\gamma_{\widehat{e,b},b'}} 
&
c_{s}\Cdot c_{s'}\Cdot c_{b'}
   \ar@2[r] ^-{\gamma_{s,\widehat{s',b'}}}
&
{c_s\Cdot c_k\Cdot  c_{k'}}
 \ar@2@/^/[dr] ^-{\gamma_{\widehat{s,k},k'}}
\\
c_u\Cdot c_v\Cdot c_t
   \ar@2@/^/[ur] ^-{\gamma_{\widehat{u,v},t}}
   \ar@2@/_/[dr] _-{\gamma_{u,\widehat{v,t}}}
&&&&&
c_{\Jch}\Cdot c_{m}\Cdot c_{k'}
\\
&
c_u\Cdot c_{w}\Cdot c_{w'}
   \ar@2[r] _-{\gamma_{\widehat{u,w},w'}}
&
c_{a}\Cdot c_{a'}\Cdot c_{w'}
  \ar@2[r] _-{\gamma_{a,\widehat{a',w'}}}
&
c_{a}\Cdot c_{d}\Cdot c_{d'}
  \ar@2[r] _-{\gamma_{a,\widehat{a',w'}}}
& c_{\Jch}\Cdot c_{l'}\Cdot c_{d'}
  \ar@2@/_/[ur] _-{\gamma_{l,\widehat{l',d'}}}
}	
\]
for all~$c_u,c_v,c_t$ in~$Q_n$ such that~$c_u\Cdot c_v$ and~$c_v\Cdot c_t$ are not Chinese  normal forms, and where the $2$-cells~$\gamma_{-,-}$ denote either a rewriting rule of $\Srs(Q_n, \C_n)$ or an identity.
\end{theorem}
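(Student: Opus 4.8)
The plan is to apply Theorem~\ref{Theorem:coherentpresentation} directly, since $\SdsC_n$ is a right associative string data structure (by Theorem~\ref{T:CommutationInsertionChinese} and Theorem~\ref{Theorem:oppositeSDSAssociativitycondition}) and $Q_n$ is a well-founded generating set by the lemma of~\ref{SSS:reducedcolumnpresentation}. That theorem already guarantees that $\Srs(Q_n,\SdsC_n)$ extends into a coherent convergent presentation of $\C_n$ by adjoining, for each critical branching $(\gamma_{u,v}\Cdot c_t,\;c_u\Cdot\gamma_{v,t})$ with $c_u\Cdot c_v$ and $c_v\Cdot c_t$ not Chinese normal forms, a generating $3$-cell filling the confluence diagram whose two sides are chosen confluent rewriting paths. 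So the only real content of Theorem~\ref{T:CoherenceQCol3} is to show that the \emph{leftmost} and \emph{rightmost} normalisation strategies provide such confluent paths and that each has length at most $3$, so that the abstract $3$-cell of~(\ref{E:HomotopyGeneratorR(Q,S)}) takes the explicit decagonal shape displayed in the statement, with the understanding that any $\gamma_{-,-}$ labelling a step that is already trivial is read as an identity $2$-cell.

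First I would invoke Proposition~\ref{P:InequalitiesLengthReductions}: for all $c_u,c_v,c_t$ in $Q_n$ with $c_u\Cdot c_v$ and $c_v\Cdot c_t$ not Chinese normal forms, one has $\len_l(c_u\Cdot c_v\Cdot c_t)\leq 5$ and $\len_r(c_u\Cdot c_v\Cdot c_t)\leq 5$. Since $\Srs(Q_n,\SdsC_n)$ is semi-quadratic (Theorem~\ref{Theorem:FSQCPChinese}), the leftmost rewriting path on $c_u\Cdot c_v\Cdot c_t$ begins with $\gamma_{\widehat{u,v},t}$ and alternates reductions of the two forms of~(\ref{E:ReductionsLeftRight}); the rightmost path begins with $\gamma_{u,\widehat{v,t}}$ and does likewise. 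Both paths terminate at the unique normal form $R_{Q_n}(c_u\star_{C_r}c_v\star_{C_r}c_t)$ by convergence. Splicing the leftmost path (top row of the diagram, through the intermediate words $c_e\Cdot c_{e'}\Cdot c_t$, $c_e\Cdot c_b\Cdot c_{b'}$, $c_s\Cdot c_{s'}\Cdot c_{b'}$, $c_s\Cdot c_k\Cdot c_{k'}$) and the reverse of the rightmost path (bottom row, through $c_u\Cdot c_w\Cdot c_{w'}$, $c_a\Cdot c_{a'}\Cdot c_{w'}$, $c_a\Cdot c_d\Cdot c_{d'}$, $c_l\Cdot c_{l'}\Cdot c_{d'}$, with common endpoint $c_l\Cdot c_m\Cdot c_{k'}$) produces exactly the boundary of the generating $3$-cell $\mathcal{X}_{u,v,t}$, and $\mathcal{X}_{u,v,t}$ is then the $3$-cell furnished by coherent Squier's theorem recalled in~\ref{SSS:CoherenceFromConvergence} for this critical branching, after identification of the intermediate words via the length bound. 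I would record that the labels $e,e',b,b',s,s',k,k',w,w',a,a',d,d',l,l',m$ are determined from $u,v,t$ by the rewriting rules of~\ref{SSS:CompletionPrecolumnpresentation}, and that whenever the length of one of the two paths is strictly less than the maximum, the surplus $2$-cells are identities, so the decagon degenerates accordingly.

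The main obstacle is bookkeeping rather than mathematics: one must check that the two paths really do have the shape drawn, i.e. that the leftmost (respectively rightmost) strategy, applied to a source of a critical branching, performs at most three genuine reduction steps before reaching a normal form in the generic case, and to organise the several degenerate subcases (square generators, commutation rules, shorter paths) so that the uniform decagonal picture still makes sense with identity $2$-cells inserted. This is precisely what Proposition~\ref{P:InequalitiesLengthReductions} together with Lemmas~\ref{CommutationLemma} and~\ref{SquareLemma} were set up to handle: the bound of $5$ on $\len_l$ and $\len_r$, combined with the observation that the first reduction is forced and that the remaining reductions past step~$3$ are commutation rules, pins down the intermediate words and shows the two sides of the diagram are the concatenations displayed. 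Finiteness of the resulting coherent presentation is immediate since $Q_n$ is finite, hence there are finitely many critical branchings and finitely many $3$-cells $\mathcal{X}_{u,v,t}$. I would conclude by remarking that this recovers, for the Chinese monoid, the kind of explicit coherent presentation obtained in~\cite{HageMalbos17} for plactic monoids, now with the two sides of each $3$-cell interpreted as the leftmost and rightmost insertion strategies $\sigma^{\top,Q_n}$ and $\sigma^{\perp,Q_n}$.
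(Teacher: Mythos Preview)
Your approach is essentially the same as the paper's: identify the critical branchings, invoke convergence (you via Theorem~\ref{Theorem:coherentpresentation}, the paper directly via Theorem~\ref{Theorem:FSQCPChinese} and Squier's theorem~\ref{SSS:CoherenceFromConvergence}), and use Proposition~\ref{P:InequalitiesLengthReductions} to pin the shape of the confluence diagram. One small slip to fix: you write that each side ``has length at most~$3$'' and later that the strategy ``performs at most three genuine reduction steps''; the bound from Proposition~\ref{P:InequalitiesLengthReductions} is~$5$, and the decagon in the statement indeed has five $2$-cells on each side, so adjust the count (the alternation you describe is correct because each rule of the reduced semi-quadratic system outputs a local normal form, forcing the next redex to the other side).
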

\begin{proof}
Any critical branching of $\Srs(Q_n, \C_n)$ has the form
\[
\xymatrix @C=2.75em @R=0.4em {
& R_{Q_n}(c_u\star_{\Ich} c_v)\Cdot c_t
\\
c_u\Cdot c_v \Cdot c_t
	\ar@2@/^2ex/ [ur] ^-{\gamma_{\widehat{u,v},t}}
	\ar@2@/_2ex/ [dr] _-{\gamma_{u,\widehat{v,t}}} 
&
\\
& {c_u\Cdot R_{Q_n}(c_v\star_{\Ich} c_t)}
}
\]
for all $c_u ,c_v, c_t$ in $Q_n$ such that~$c_u\Cdot c_v$ and~$c_v\Cdot c_t$ are not Chinese  normal forms, that is confluent by Theorem~\ref{Theorem:FSQCPChinese}. Moreover by Proposition~\ref{P:InequalitiesLengthReductions}, $\len_l(c_u\Cdot c_v\Cdot c_t)\leq 5$ and~$\len_r(c_u\Cdot c_v\Cdot c_t)\leq 5$.
We conclude with  Squier's coherence theorem recalled in Subsection~\ref{SS:CoherentPresentation}.
\end{proof}

Note that some $2$-cells~$\gamma_{-,-}$ in the boundary of the generating syzygy $\Xr_{u,v,t}$ can be identity. However, following construction given in the proof of Proposition~\ref{P:InequalitiesLengthReductions}, if the source (resp. target) of~$\Xr_{u,v,t}$ is of length $5$, then its target (resp. source) is of length at most $4$.

\subsection{Relations among the insertion algorithms}
\label{SS:RelationsAmongInsertionsAlgorithms}
Note that the generating syzygies of the coherent presentation of the monoid~$\C_n$ obtained in  Theorem~\ref{T:CoherenceQCol3} can be interpreted in terms of the right and left insertion algorithms as follows.
Consider the rewriting system on~$Q_n$, whose rules are 
\[
c_u\Cdot c_v \fl R_{Q_n}(c_v\star_{\Jch}c_u),
\]
for all~$c_u,c_v$ in~$Q_n$ such that~$c_u\Cdot c_v \neq R_{Q_n}(c_v\star_{\Jch}c_u)$.
By Corollary~\ref{C:oppositeSDSAssociativitycondition}, the equality~$R_{Q_n} (c_v\star_{\Jch}c_u)= R_{Q_n}(c_u\star_{\Ich} c_v)$ holds for all~$c_u,c_v$ in~$Q_n$, and thus this rewriting system coincides with~$\Srs(Q_n,\C_n)$.  Hence, the generating syzygy of the coherent presentation of  Theorem~\ref{T:CoherenceQCol3} has the following form
\[
\xymatrix @C=2.7em @R=-0.1em {
c_u\Cdot c_v\Cdot c_t
	\ar@2@/^4ex/ [rr] ^{\sigma^{\vdash}_{c_u\cdot c_v\cdot c_t}} _{}="src"
	\ar@2@/_4ex/ [rr] _{\sigma^{\dashv}_{c_u\cdot c_v\cdot c_t}}^{}="tgt"
&& R_{Q_n}(c_u \star_{\Ich}c_v \star_{\Ich} c_t)
\ar@3 "src"!<0pt,-10pt>;"tgt"!<0pt,10pt> ^-{}
}
\]
for all~$c_u, c_v, c_t$ in $Q_n$ such that $c_u\Cdot c_v\neq R_{Q_n}(c_u\star_{\Ich} c_v)$ and $c_v\Cdot c_t \neq R_{Q_n}(c_v\star_{\Ich} c_t)$, where the application of the leftmost (resp. rightmost) normalization strategy~$\sigma^{\vdash}$ (resp.~$\sigma^{\dashv}$) on the word $c_u\Cdot c_v\Cdot c_t$ corresponds to the application of the right (resp. left) insertion 
\[
 R_{Q_n}\big(\emptyset \insr{\Ich} \R(c_u)\R(c_v)\R(c_t)\big)
\qquad
\text{\big(\,resp. }
 R_{Q_n}(\R(c_u)\R(c_v)\R(c_t) \insl{} \emptyset)
\text{\;\big).}
\]

\subsection{Actions of Chinese monoids on categories} 
\label{SSS:ActionsChineseMonoids}
A monoid~$\M$ can be seen as a $2$-category with exactly one $0$-cell~$\bullet$, with the elements of the monoid~$\M$ as $1$-cells and with identity $2$-cells only. The category of \emph{actions of $\M$ on categories} is the category~$\mathrm{Act}(\M)$ of $2$-representations of~$\M$ in the category~$\Cat$ of categories. The full subcategory of~~$\mathrm{Act}(\M)$ whose objects are the $2$-functors is denoted by~$2\Cat(\M,\Cat)$. We refer the reader to~\cite{GaussentGuiraudMalbos15} for a full introduction on the category of $2$-representations of $2$-categories.
More explicitly, an action~$A$ of the monoid~$\M$ is specified by a category~\mbox{$\C=A(\bullet)$}, an endofunctor~\mbox{$A(u):\C\to\C$} for every~$u$ in~$\M$, a natural isomorphism~\mbox{$A_{u,v}:A(u)A(v)\dfl A(uv)$} for every elements~$u$ and~$v$ of~$\M$, and a natural isomorphism~$A_{\bullet}:1_{\C}\dfl A(1)$ such that:
\begin{enumerate}[i)]
\item for every triple~$(u,v,w)$ of elements of the monoid~$\M$, the following diagram commutes
\[
\xymatrix @!C @R=0.4em @C=0.5em{
& A(uv)A(w)
	\ar@2 @/^2ex/ [dr] ^-{A_{uv,w}}
	\ar@{} [dd] |-{=}
\\
A(u)A(v)A(w)
	\ar@2 @/^2ex/ [ur] ^-{A_{u,v}A(w)}
	\ar@2 @/_2ex/ [dr] _-{A(u)A_{v,w}}
&& A(uvw)
\\
& A(u)A(vw)
	\ar@2 @/_2ex/ [ur] _-{A_{u,vw}}
}
\]
\item for every element~$u$ of the monoid~$\M$, the following diagrams commute
\[
\xymatrix @!C @R=0.4em @C=0.5em {
& A(1)A(u)
	\ar@2 @/^2ex/ [dr] ^-{A_{1,u}}
\\
A(u)
	\ar@2 @/^2ex/ [ur] ^-{A_{\bullet} A(u)}
	\ar@{=} [rr] _-{}="tgt"
&& A(u)
	\ar@{} "1,2";"tgt" |-{=}
}
\qquad\qquad
\xymatrix @!C @R=0.4em {
& A(u)A(1)
	\ar@2 @/^2ex/ [dr] ^-{A_{u,1}}
\\
A(u) 
	\ar@2 @/^2ex/ [ur] ^-{A(u) A_{\bullet}}
	\ar@{=} [rr] _-{}="tgt"
&& A(u)
	\ar@{} "1,2";"tgt" |-{=}
}
\]
\end{enumerate}

Let $\M$ be a monoid and let~$\Sigma$ be an extended presentation of~$\M$. The $(3,1)$-polygraph~$\Sigma$ is a coherent presentation of~$\M$ if, and only if, for every $2$-category~$\mathcal{C}$, there is an equivalence of categories between~$\mathrm{Act}(\M)$ and~$2\Cat(\Sigma_1^{\ast}/\Sigma_2,\mathcal{C})$,
that is natural in~$\mathcal{C}$,~\cite{GaussentGuiraudMalbos15}.
In this way, up to equivalence, the actions of a monoid~$\M$ on categories are the same as the $2$-functors from~$\Sigma_1^{\ast}/\Sigma_2$ to $\Cat$.

Using this description,  Theorem~\ref{T:CoherenceQCol3} allows us to present actions of Chinese monoids on categories as follows:

\begin{theorem}
\label{Theorem:ActionsOnCategories}
The category $\Act(\C_n)$ of actions of the Chinese monoid $\C_n$ on categories is equivalent to the category of  $2$-functors from the free $(2, 1)$-category~$\tck{\Srs(Q_n, \C_n)}$  generated by the rewriting system~$\Srs(Q_n, \C_n)$ to the category~$\Cat$ of categories, that sends any generating syzygy $\Xr_{u,v,t}$ to commutative diagrams in the category $\Cat$.
\end{theorem}

\begin{small}
\renewcommand{\refname}{\Large\textsc{References}}
\bibliographystyle{plain}
\bibliography{biblioPLAXIQUE}
\end{small}

\clearpage

\quad

\vfill

\begin{flushright}
\begin{small}
\noindent \textsc{Nohra Hage} \\
\url{nohra.hage@univ-catholille.fr} \\
Faculté de Gestion, Economie \& Sciences (FGES),\\
Université Catholique de Lille,\\
60 bd Vauban, \\
CS 40109, 59016 Lille Cedex, France\\
\bigskip

\noindent \textsc{Philippe Malbos} \\
\url{malbos@math.univ-lyon1.fr} \\
Univ Lyon, Universit\'e Claude Bernard Lyon 1\\
CNRS UMR 5208, Institut Camille Jordan\\
43 blvd. du 11 novembre 1918\\
F-69622 Villeurbanne cedex, France
\end{small}
\end{flushright}

\vspace{0.25cm}

\begin{small}---\;\;\today\;\;-\;\;\hhmm\;\;---\end{small} \hfill
\end{document}